\newcommand{\beq}{\begin{equation}}
\newcommand{\eeq}{\end{equation}}
\newcommand{\bea}{\begin{eqnarray}}
\newcommand{\eea}{\end{eqnarray}}
\newcommand{\beas}{\begin{eqnarray*}}
\newcommand{\eeas}{\end{eqnarray*}}
\newtheorem{theorem}{Theorem}[section]
\newtheorem{proposition}[theorem]{Proposition}
\newtheorem{prop}[theorem]{Proposition}
\newtheorem{lemma}[theorem]{Lemma}
\newtheorem{remark}[theorem]{Remark}
\newtheorem{example}[theorem]{Example}
\newtheorem{examples}[theorem]{Examples}
\newtheorem{foo}[theorem]{Remarks}
\newenvironment{proof}{\addvspace{\medskipamount}\par\noindent{\it
Proof}.}
{\unskip\nobreak\hfill$\Box$\par\addvspace{\medskipamount}}
\newcommand{\bS}{\mathbb S}
\newcommand{\R}{\mathbb R}
\title{The Subelliptic Heat Kernel on the CR sphere}
\author{Fabrice Baudoin\footnote{First author supported in part by
NSF Grant DMS 0907326}, Jing Wang }
\date{Department of Mathematics, Purdue University \\
 West Lafayette, IN, USA}
\begin{document}
\maketitle

\begin{abstract}
We study the heat kernel of the sub-Laplacian $L$  on the CR sphere $\mathbb{S}^{2n+1}$.  An explicit and geometrically meaningful formula for the heat kernel  is obtained. As  a by-product we recover in a simple way the Green function of the conformal sub-Laplacian $-L+n^2$ that was obtained by Geller \cite{G}, and also get an explicit formula for the sub-Riemannian distance. The key point is to work in a set of coordinates that reflects the symmetries coming from the  fibration  $\mathbb{S}^{2n+1} \rightarrow \mathbb{CP}^n$.
\end{abstract}

\tableofcontents
\clearpage

\section{Introduction}
The purpose of this work is to study the heat kernel of the sub-Laplacian of the standard CR structure on $\bS^{2n+1}$.  More precisely, we will be interested in explicit analytic representations and small time asymptotics of the kernel.

 A key point in our study is to take advantage of the radial  symmetries of the fibration $\mathbb{S}^{2n+1} \rightarrow \mathbb{CP}^n$ to introduce coordinates that are adapted to the geometry of the problem. In particular, it is shown that the kernel has a cylindric invariance property and actually only depends on two variables $r,\theta$. The variable $\theta$ is the local fiber coordinate of the fibration and $r$ is a radial coordinate on $\mathbb{CP}^n$ .  In these coordinates the cylindric part of the sub-Laplacian $L$ is the operator
 \[
\tilde{L}=\frac{\partial^2}{\partial r^2}+((2n-1)\cot r-\tan r)\frac{\partial}{\partial r}+\tan^2r\frac{\partial^2}{\partial \theta^2} .
\]
Eigenvalues and eigenvectors for $\tilde{L}$ are computed and as a consequence of the general Minakshisundaram-Pleijel expansion theorem we deduce that the subelliptic heat kernel  (issued from the north pole) can be written
\[
p_t(r, \theta)=\frac{\Gamma(n)}{2\pi^{n+1}}\sum_{k=-\infty}^{+\infty}\sum_{m=0}^{+\infty} (2m+|k|+n){m+|k|+n-1\choose n-1}e^{-\lambda_{m,k}t+ik \theta}(\cos r)^{|k|}P_m^{n-1,|k|}(\cos 2r),
\]
where $P_m^{n-1,|k|}$ is a Jacobi polynomial and $\lambda_{m,k}=4m(m+|k|+n)+2|k|n$. This formula is very useful  to study the long-time behavior of the heat kernel but seems difficult to use in the study of small-time asymptotics or for the purpose of proving upper and lower bounds. In order to derive small-time asymptotics of the kernel, we give another analytic expression for $p_t(r, \theta)$ which is much more geometrically meaningful. This formula is obtained thanks to the observation that the Reeb vector field $T$ of the CR structure of $\bS^{2n+1}$ commutes with the sub-Laplacian. This commutation implies that the subelliptic  heat semigroup $e^{tL}$ can be written $e^{-t T^2} e^{t \Delta}$, where $\Delta$ is the Laplace-Beltrami operator of the standard  Riemannian structure on $\bS^{2n+1}$. The formula we obtain is explicit enough to recover Geller's formula (see  \cite{G}) for the fundamental solution of the conformal sub-Laplacian $-L+n^2$. By using the steepest descent method, it also allows to derive the small-time asymptotics of the heat kernel. A by-product of this small-time asymptotics is a previously unknown explicit formula for the sub-Riemannian distance.

To put things in perspective let us observe that the study of explicit expressions for subelliptic heat kernels has generated a great amount of work (see \cite{A}, \cite{B}  \cite{bauer},  \cite{BB}, \cite{BGG} \cite{Bo}, \cite{Ga}  and the references therein). The motivations for finding explicit formulas are numerous, among them we can cite: sharp constant in functional inequalities  (see \cite{BFM}, \cite{LF}),  computation of the sub-Riemannian metric (see \cite{BB}), sharp upper and lower bounds for the heat kernel (see \cite{BGG},\cite{El}),  and semigroup sub-commutations (see \cite{Li}). 
However, despite such numerous works, very few explicit and tractable formulas are actually known and most of them are restricted to a Lie group framework. The present work gives explicit and tractable expressions that hold  in a natural  sub-Riemannian model.

\section{The sub-Laplacian on $\mathbb{S}^{2n+1}$ }

\subsection{Geometry of the standard CR sphere}

We consider the odd dimensional sphere 
\[
\bS^{2n+1}=\lbrace z=(z_1,\cdots,z_{n+1})\in \mathbb{C}^{n+1}, \| z \| =1\rbrace.
\]
It is a  strictly pseudo convex  CR manifold (see \cite{CR}) whose geometry can be described as follows. There is a natural group action of $\mathbb{S}^1$ on $\bS^{2n+1}$ which is  defined by $$(z_1,\cdots, z_n) \rightarrow (e^{i\theta} z_1,\cdots, e^{i\theta} z_n). $$ The generator of this action shall be denoted by $T$ throughout the paper. We have for all $f \in \mathcal{C}^\infty(\bS^{2n+1})$
\[
Tf(z)=\frac{d}{d\theta}f(e^{i\theta}z)\mid_{\theta=0},
\]
so that
\[
T=i\sum_{j=1}^{n+1}\left(z_j\frac{\partial}{\partial z_j}-\overline{z_j}\frac{\partial}{\partial \overline{z_j}}\right).
\]
This action induces a fibration (circle bundle) from $\bS^{2n+1}$ to the projective complex space $\mathbb{CP}^n$. The vector field $T$ is the Reeb vector field (characteristic direction) of the pseudo-Hermitian contact form
\[
\eta=-\frac{i}{2}\sum_{j=1}^{n+1}(\overline{z_j}dz_j-z_j d\overline{z_j}).
\]

\subsection{The sub-Laplacian}

For $j=1,\cdots,n+1$, let us denote  $$T_j=\frac{\partial}{\partial z_j}-\overline{z_j}\mathcal{S},$$ where $\mathcal{S}=\sum_{k=1}^{n+1}z_k\frac{\partial}{\partial z_k}$, and define the second order differential operator $L$ on $\mathcal{C}^\infty(\bS^{2n+1})$ as follows:
\begin{equation}\label{eq2}
L=2\sum_{j=1}^{n+1}(T_j\overline{T_j}+\overline{T_j}T_j).
\end{equation}

It can be checked that $L$ is the CR sub-Laplacian of the previously described structure (see for instance \cite{BFM}, \cite{CKS}). It is essentially self-adjoint on $\mathcal{C}^\infty(\bS^{2n+1})$ with respect to the uniform measure of $\bS^{2n+1}$ and related to the Laplace-Beltrami operator $\Delta$ of the standard Riemannian structure on $\bS^{2n+1}$ by the formula:
\[
L=\Delta -T^2.
\]
We can observe, a fact which will be important for us, that $L$ and $T$ commute, that is, on smooth functions $TL=LT$.

To study $L$ we now introduce a set of coordinates that takes into account the symmetries of the fibration $\mathbb{S}^{2n+1} \rightarrow \mathbb{CP}^n$. Let $(w_1,\cdots, w_n,\theta)$ be local coordinates for $\bS^{2n+1}$, where $(w_1,\cdots,w_n)$ are the local inhomogeneous coordinates for $\mathbb{CP}^n$ given by $w_j=z_j/z_{n+1}$, and $\theta$ is the local fiber coordinate. i.e., $(w_1, \cdots, w_n)$ parametrizes the complex lines passing through the north pole\footnote{We call north pole the point with complex coordinates $z_1=0,\cdots, z_{n+1}=1$, it is therefore the point with real coordinates $(0,\cdots,0,1,0)$. }, while $\theta$ determines a point on the line that is of unit distance from the north pole. More explicitly, these coordinates are given by
\begin{align}\label{cylinder}
(w_1,\cdots,w_n,\theta)\longrightarrow \left(\frac{w_1e^{i\theta}}{\sqrt{1+\rho^2}},\cdots,\frac{w_ne^{i\theta}}{\sqrt{1+\rho^2}},\frac{e^{i\theta}}{\sqrt{1+\rho^2}} \right),
\end{align}
where $\rho=\sqrt{\sum_{j=1}^{n}|w_j|^2}$, $\theta \in \R/2\pi\mathbb{Z}$, and $w \in \mathbb{CP}^n$. In these coordinates, it is clear that $T=\frac{\partial}{\partial \theta}$. Our goal is now to compute the sub-Laplacian $L$.  In the sequel we denote
\[
\mathcal{R}=\sum_{j=1}^n w_j \frac{\partial}{\partial w_j}.
\] 

\begin{proposition}\label{prop2}
In the coordinates (\ref{cylinder}), we have
\[
L=4(1+\rho^2)\sum_{k=1}^n \frac{\partial^2}{\partial w_k \partial\overline{w_k}}+ 4(1+\rho^2)\mathcal{R} \overline{\mathcal{R}}+\rho^2\ \frac{\partial^2}{\partial \theta^2}-2i(\rho^2+1)(\mathcal{R} -\overline{\mathcal{R}})\frac{\partial}{\partial\theta}.
\]
\end{proposition}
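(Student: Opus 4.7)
The strategy is to express $T_j$ and $\overline{T_j}$ as vector fields in the coordinates $(w_1,\dots,w_n,\theta)$, substitute into the definition $L=2\sum_{j=1}^{n+1}(T_j\overline{T_j}+\overline{T_j}T_j)$, and collect terms. Since the coordinates $(w,\theta)$ are only defined on the sphere, I will extend them to a neighborhood in $\mathbb{C}^{n+1}\setminus\{z_{n+1}=0\}$ by $w_k=z_k/z_{n+1}$ and $e^{i\theta}=z_{n+1}/|z_{n+1}|$; the final answer is then independent of the extension since each $T_j$ is tangent to $\bS^{2n+1}$.

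A direct application of the chain rule gives, for the pullback $F(z,\overline z)$ of a function $\tilde f(w,\theta)$,
\[
\mathcal S F=-\tfrac{i}{2}\,\partial_\theta\tilde f, \qquad \partial_{z_j}F=\tfrac{1}{z_{n+1}}\partial_{w_j}\tilde f \ \ (j\le n), \qquad \partial_{z_{n+1}}F=-\tfrac{1}{z_{n+1}}\mathcal R\tilde f+\tfrac{1}{2iz_{n+1}}\partial_\theta\tilde f.
\]
(The first identity is consistent with the fact, given in the text, that $T=i\mathcal S-i\overline{\mathcal S}=\partial_\theta$.) Substituting into $T_j=\partial_{z_j}-\overline{z_j}\mathcal S$ and using $1/z_{n+1}=e^{-i\theta}\sqrt{1+\rho^2}$ together with $\overline{z_j}=\overline{w_j}e^{-i\theta}/\sqrt{1+\rho^2}$, I expect to obtain $T_j=e^{-i\theta}X_j$ with
\[
X_j=\sqrt{1+\rho^2}\,\partial_{w_j}+\tfrac{i\overline{w_j}}{2\sqrt{1+\rho^2}}\partial_\theta \quad (j\le n), \qquad X_{n+1}=-\sqrt{1+\rho^2}\,\mathcal R-\tfrac{i\rho^2}{2\sqrt{1+\rho^2}}\partial_\theta,
\]
and correspondingly $\overline{T_j}=e^{i\theta}\overline{X_j}$ by complex conjugation.

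The last step is to expand $T_j\overline{T_j}g=e^{-i\theta}X_j\bigl(e^{i\theta}\overline{X_j}g\bigr)=X_j\overline{X_j}g+[e^{-i\theta}X_j(e^{i\theta})]\overline{X_j}g$ and the analogous formula for $\overline{T_j}T_j$. The cross terms arising from the phases contribute explicit first-order pieces such as $-\tfrac{\overline{w_j}}{2\sqrt{1+\rho^2}}\overline{X_j}$ for $j\le n$, with sign-flipped analogues for $j=n+1$. Summing over $j$ and using the identities $\mathcal R\rho^2=\rho^2$, $\mathcal R\sqrt{1+\rho^2}=\rho^2/(2\sqrt{1+\rho^2})$, and the commutativity $[\mathcal R,\overline{\mathcal R}]=0$, the terms of first order in $\mathcal R$ and $\overline{\mathcal R}$ conspire to cancel, and what remains is exactly the stated expression.

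The main obstacle is algebraic bookkeeping rather than any conceptual difficulty: the first-order corrections arise from three distinct sources—the phases $e^{\pm i\theta}$ commuted past the $X_j$, the derivatives of $\sqrt{1+\rho^2}$ inside products like $X_j\overline{X_j}$, and the first-order part of $X_{n+1}$ itself. One must verify that these three families of corrections sum to zero so that only the second-order operator together with the single mixed first-order term $-2i(1+\rho^2)(\mathcal R-\overline{\mathcal R})\partial_\theta$ survives.
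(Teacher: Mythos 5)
Your proposal is correct: the chain--rule identities you state (e.g.\ $\mathcal S F=-\tfrac{i}{2}\partial_\theta\tilde f$, $\partial_{z_{n+1}}F=-\tfrac{1}{z_{n+1}}\mathcal R\tilde f+\tfrac{1}{2i z_{n+1}}\partial_\theta\tilde f$) are exactly right, the resulting frame $T_j=e^{-i\theta}X_j$ is correct, and the cancellations you assert do occur --- carrying out the expansion one finds that the zeroth-order-in-$\partial_\theta$ first-order terms $\tfrac{w_j}{2}\partial_{w_j}+\tfrac{\overline{w_j}}{2}\partial_{\overline{w_j}}$ and $\tfrac{\rho^2}{2}(\mathcal R+\overline{\mathcal R})$ produced inside $X_j\overline{X_j}+\overline{X_j}X_j$ are exactly killed by the phase corrections $-\tfrac{w_j}{2\sqrt{1+\rho^2}}X_j-\tfrac{\overline{w_j}}{2\sqrt{1+\rho^2}}\overline{X_j}$ and $\tfrac{\rho^2}{2\sqrt{1+\rho^2}}(X_{n+1}+\overline{X_{n+1}})$, while the $\partial_\theta^2$ coefficients sum to $\tfrac{\rho^2+\rho^4}{2(1+\rho^2)}=\tfrac{\rho^2}{2}$, giving the stated formula after multiplying by $2$. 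Your route differs from the paper's in organization rather than substance. The paper first expands $L$ in the ambient coordinates into three pieces, $2\sum_k(\partial_{z_k}\partial_{\overline{z_k}}+\partial_{\overline{z_k}}\partial_{z_k})-2\sum_{j,k}(z_j\overline{z_k}\partial_{z_j}\partial_{\overline{z_k}}+\mathrm{c.c.})-2n(\mathcal S+\overline{\mathcal S})$, and then converts each piece to $(w,\theta)$ via the same chain-rule formulas, handling the constraint $\|z\|=1$ by adjoining an auxiliary radial coordinate $\kappa$ and setting $\partial/\partial\kappa=0$; you instead change coordinates at the level of the individual horizontal vector fields $T_j$ and then compose, handling the constraint by noting that each $T_j$ is tangent to the sphere so the off-sphere extension is immaterial. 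Your version has the advantage of exhibiting the horizontal frame explicitly in the cylindrical coordinates (which is of independent geometric interest), at the cost of tracking the phases $e^{\pm i\theta}$; the paper's version avoids the phases but requires the intermediate three-term expansion. Your level of detail is comparable to the paper's own (``tedious but straightforward computations''), and the computation closes as claimed.
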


\begin{proof}
From (\ref{eq2}), we know that 
\[
L=2\sum_{k=1}^{n+1}\left(\frac{\partial^2}{\partial z_k\partial\overline{z_k}}+\frac{\partial^2}{\partial \overline{z_k}\partial z_k}\right)-2\sum_{k,j=1}^{n+1}\left(z_j\overline{z_k}\frac{\partial^2}{\partial z_j\partial\overline{z_k}}+\overline{z_j}z_k\frac{\partial^2}{\partial \overline{z_j}\partial z_k}\right)-2n(\mathcal{S}+\overline{\mathcal{S}}).
\]
By using now the diffeomorphism 
\begin{align*}
(w_1,\cdots,w_n,\theta,\kappa)\longrightarrow \left(\frac{\kappa w_1e^{i\theta}}{\sqrt{1+\rho^2}},\cdots,\frac{\kappa w_ne^{i\theta}}{\sqrt{1+\rho^2}},\frac{\kappa e^{i\theta}}{\sqrt{1+\rho^2}} \right),
\end{align*}
and then restrict to the sphere on which we have:
\[
\kappa=1, \frac{\partial }{\partial \kappa}=0,
\]
we compute that on $\bS^{2n+1}$,  for $1\leq k\leq n$
\begin{eqnarray*}
\frac{\partial}{\partial z_k} &=&\sqrt{1+\rho^2}e^{-i\theta}\frac{\partial}{\partial w_k}\\
\frac{\partial}{\partial \overline{z_k}} &=&\sqrt{1+\rho^2}e^{i\theta}\frac{\partial}{\partial \overline{w_k}} \\
\end{eqnarray*}
and
\begin{eqnarray*}
\frac{\partial}{\partial z_{n+1}} &=&-\sqrt{1+\rho^2}e^{-i\theta}\left(\sum_{j=1}^n w_j\frac{\partial}{\partial w_j}-\frac{1}{2i}\frac{\partial}{\partial \theta}\right) \\
\frac{\partial}{\partial \overline{z_{n+1}}}  &=& -\sqrt{1+\rho^2}e^{i\theta}\left(\sum_{j=1}^n\overline{w_j}\frac{\partial}{\partial \overline{w_j}}+\frac{1}{2i}\frac{\partial}{\partial \theta}\right).
\end{eqnarray*}
Tedious but straightforward computations lead then to
\[
\sum_{k,j=1}^{n+1}z_j\overline{z_k}\frac{\partial^2}{\partial z_j\partial\overline{z_k}}=\frac{1}{4}\frac{\partial^2}{\partial \theta^2}-\frac{1}{4i}\frac{\partial}{\partial\theta}
\]
which implies that
\[
\sum_{k,j=1}^{n+1}\left(z_j\overline{z_k}\frac{\partial^2}{\partial z_j\partial\overline{z_k}}+\overline{z_j}z_k\frac{\partial^2}{\partial \overline{z_j}\partial z_k}\right)=\frac{1}{2}\frac{\partial^2}{\partial \theta^2}.
\]
Moreover, it is not hard to compute
\[
\sum_{k=1}^{n+1}\left(\frac{\partial^2}{\partial z_k\partial\overline{z_k}}+\frac{\partial^2}{\partial \overline{z_k}\partial z_k}\right)=2(1+\rho^2)\left(\frac{\partial^2}{\partial w_k\partial\overline{w_k}}+\mathcal{R}\overline{\mathcal{R}}+\frac{1}{4}\frac{\partial^2}{\partial \theta^2}+\frac{1}{2i}(\mathcal{R}-\overline{\mathcal{R}})\frac{\partial}{\partial\theta}\right),
\]
where $\mathcal{R}=\sum_{k=1}^nw_k\frac{\partial}{\partial w_k}$. Finally, it is easy to see that $\mathcal{S}+\overline{\mathcal{S}}=0$. Hence we have the conclusion.
\end{proof}

\begin{remark}\label{eq4}
Notice that $T=\frac{\partial}{\partial\theta}$, and thus the Laplace-Beltrami operator is given by:
\begin{align*}
\Delta&=L+\frac{\partial^2}{\partial\theta^2} \\
 &=4(1+\rho^2)\sum_{k=1}^n \frac{\partial^2}{\partial w_k \partial\overline{w_k}}+ 4(1+\rho^2)\mathcal{R} \overline{\mathcal{R}}+(1+\rho^2)\ \frac{\partial^2}{\partial \theta^2}-2i(\rho^2+1)(\mathcal{R} -\overline{\mathcal{R}})\frac{\partial}{\partial\theta}
\end{align*}
\end{remark}

Due to the symmetries of the fibration $\bS^{2n+1} \to \mathbb{CP}^n$, in the study of the heat kernel, it will be enough  to compute the radial part of $L$ with respect to the cylindrical variables $(\rho,\theta)$.

Let us consider the following second order differential operator
\[
 \tilde{L}=\left(1+\rho^2\right)^2\frac{\partial^2}{\partial \rho^2}+\left(\frac{(2n-1)(1+\rho^2)}{\rho}+(1+\rho^2)\rho\right)\frac{\partial}{\partial \rho}+\rho^2\frac{\partial^2 }{\partial \theta^2},
\]
which is defined on the space $\mathcal{D}$ of smooth functions $ f :  \mathbb{R}_{\ge 0} \times \R/2\pi\mathbb{Z}  \to \mathbb{R}$  that satisfies $\frac{\partial f}{\partial  \rho} =0$  if $\rho =0$. It is  seen that $\tilde{L}$ is essentially self-adjoint on $\mathcal{D}$ with respect to the measure $\frac{\rho^{2n-1}}{(1+\rho^2)^{2n+\frac{1}{2} }} d\rho d\theta$.

\begin{proposition} Let us denote by $\psi$ the map from $\bS^{2n+1} $ to $  \mathbb{R}_{\ge 0} \times \R/2\pi\mathbb{Z} $ such that 
\[
\psi \left(\frac{w_1e^{i\theta}}{\sqrt{1+\rho^2}},\cdots,\frac{w_ne^{i\theta}}{\sqrt{1+\rho^2}},\frac{e^{i\theta}}{\sqrt{1+\rho^2}} \right)=\left(\rho, \theta \right).
\]
 For every  $f \in \mathcal{D}$,  we have
\[
L(f \circ \psi)=(\tilde{L} f) \circ \psi.
\]
\end{proposition}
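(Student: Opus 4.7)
The plan is to start from the explicit form of $L$ given in Proposition 2.1 and apply each of its four terms directly to $f\circ\psi$. The key observation is that in the coordinates $(w_1,\dots,w_n,\theta)$ the function $f\circ\psi$ depends on $w$ only through the radial variable $\rho = \sqrt{\sum_j|w_j|^2}$, so every $w_k$-derivative is funneled through the chain rule via $\partial_{w_k}\rho = \overline{w_k}/(2\rho)$ and $\partial_{\overline{w_k}}\rho = w_k/(2\rho)$.

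First I would compute the action of $\mathcal{R}$ and $\overline{\mathcal{R}}$ on a function $g=g(\rho)$. Since $\mathcal{R}\rho^2 = \sum_k w_k\overline{w_k} = \rho^2$, the chain rule immediately gives
\[
\mathcal{R}\,g(\rho) = \tfrac{\rho}{2}\,g'(\rho) = \overline{\mathcal{R}}\,g(\rho),
\]
so $(\mathcal{R}-\overline{\mathcal{R}})g(\rho)=0$. This kills the cross term $-2i(1+\rho^2)(\mathcal{R}-\overline{\mathcal{R}})\partial_\theta$ in the formula for $L$ when applied to $f(\rho,\theta)$. Iterating the computation yields $\mathcal{R}\overline{\mathcal{R}}g(\rho) = \tfrac{\rho^2}{4}g''(\rho) + \tfrac{\rho}{4}g'(\rho)$.

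Next I would handle the horizontal Laplacian piece $\sum_k\partial^2_{w_k\overline{w_k}}$ on radial functions. Using $\partial_{w_k}(w_k/(2\rho)) = 1/(2\rho) - |w_k|^2/(4\rho^3)$ and summing, a short calculation gives
\[
\sum_{k=1}^n \frac{\partial^2 g(\rho)}{\partial w_k \partial \overline{w_k}} = \tfrac{1}{4}g''(\rho) + \tfrac{2n-1}{4\rho}g'(\rho).
\]
This is the step I expect to require the most care in bookkeeping: the factor $2n-1$ arises as $n$ copies of $1/(2\rho)$ minus the collapse of $\sum_k |w_k|^2/(4\rho^3) = 1/(4\rho)$. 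The remaining obstacle is only cosmetic — the coefficient $(2n-1)/\rho$ is singular at the origin, but the defining condition $\partial_\rho f|_{\rho=0}=0$ for $f\in\mathcal{D}$ ensures that the product stays smooth there.

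Finally, I would substitute these two radial identities into the formula of Proposition 2.1 applied to $f(\rho,\theta)$; since the $\theta$-derivatives commute with all $w$-differential operators, the term $\rho^2\partial_\theta^2 f$ transfers unchanged. Collecting the remaining pieces produces
\[
L(f\circ\psi) = (1+\rho^2)^2\,\partial_\rho^2 f + \left(\tfrac{(2n-1)(1+\rho^2)}{\rho}+(1+\rho^2)\rho\right)\partial_\rho f + \rho^2\,\partial_\theta^2 f,
\]
which is precisely $(\tilde L f)\circ\psi$, as desired.
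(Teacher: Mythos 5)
Your proposal is correct and follows essentially the same route as the paper: starting from the explicit formula for $L$ in Proposition 2.1 and reducing each term on functions of $(\rho,\theta)$ via the radial identities $\mathcal{R}(f\circ\psi)=\overline{\mathcal{R}}(f\circ\psi)=\tfrac{\rho}{2}\partial_\rho f$ and $\sum_k\partial^2_{w_k\overline{w_k}}(f\circ\psi)=\tfrac14\partial_\rho^2 f+\tfrac{2n-1}{4\rho}\partial_\rho f$. The only difference is that the paper asserts these two identities by appealing to the symmetries of the fibration, whereas you verify them directly by the chain rule, with all coefficients checking out.
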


\begin{proof}
Notice that by symmetries of the fibration, we have
\[
\left(\sum_{k=1}^n\frac{\partial^2}{\partial w_k\partial\overline{w_k}}\right)(f\circ\psi)=\left(\left(\frac{1}{4}\frac{\partial^2}{\partial\rho^2}+\frac{2n-1}{4\rho}\frac{\partial}{\partial \rho}\right)f\right)\circ\psi
\]
and
\[
\mathcal{R}(f\circ\psi)=\overline{\mathcal{R}}(f\circ\psi)=\left(\left(\frac{1}{2}\rho\frac{\partial}{\partial \rho}\right)f\right)\circ\psi.
\]
Together with Proposition \ref{prop2}, we have the conclusion.
\end{proof}

Finally, instead of $\rho$, it will be expedient to introduce the variable $r$ which is defined by $\rho=\tan r$. It is then easy to see that we can write $\tilde{L}$ as
\begin{equation}\label{eq3}
\tilde{L}=\frac{\partial^2}{\partial r^2}+((2n-1)\cot r-\tan r)\frac{\partial}{\partial r}+\tan^2r\frac{\partial^2}{\partial \theta^2}.
\end{equation}
This is the expression of $\tilde{L}$ which shall be the most convenient for us and that is going to be used throughout the paper.

We can observe that  $\tilde{L}$ is symmetric with respect to the measure
\[
d\mu_r=\frac{2\pi^n}{\Gamma(n)}(\sin r)^{2n-1}\cos r drd\theta.
\]
The normalization is chosen in such a way that
\[
\int_{-\pi}^{\pi}\int_0^{\frac{\pi}{2}}d\mu_r=\mu(\bS^{2n+1})=\frac{2\pi^{n+1}}{\Gamma (n+1)}.
\]

\begin{remark}
In the case of $\bS^3$ $(n=1)$, which is isomorphic to the Lie group $SU(2)$, we obtain
\begin{eqnarray*}
\tilde{L}&=&(1+\rho^2)^2\frac{\partial^2}{\partial\rho^2}+\left(\frac{(1+\rho^2)^2}{\rho}\right)\frac{\partial}{\partial \rho}+\rho^2\frac{\partial^2}{\partial\theta^2}\\
              &=&\frac{\partial^2}{\partial r^2}+2\cot 2r\frac{\partial}{\partial r}+\tan^2r\frac{\partial^2}{\partial\theta^2}
\end{eqnarray*}
This coincides with the result in \cite{BB}.
\end{remark}

\begin{remark}
By  Remark  \ref{eq4},  we see that the radial part of the Laplace-Beltrami operator in cylindrical coordinates is
\[
\Delta^r=\frac{\partial^2}{\partial r^2}+((2n-1)\cot r-\tan r)\frac{\partial}{\partial r}+\frac{1}{\cos^2r}\frac{\partial^2}{\partial\theta^2}.
\]
On the other hand, since in the coordinates (\ref{cylinder}), we have $z_{n+1}=\cos r e^{i\theta}$, it is clear that the Riemannian distance form the north pole $\delta$, satisfies
\[
\cos \delta =\cos  r \cos \theta.
\]
An easy calculation shows  that by making the change of variable  $\cos \delta =\cos r \cos \theta$, the operator $\Delta^r$ acts on functions depending only on $\delta$ as
\[
 \frac{\partial^2}{\partial \delta^2}+2n\cot\delta\frac{\partial}{\partial\delta}.
\]
This expression is known to indeed be the expression of the radial part of $\Delta$ in spherical coordinates. 
\end{remark}

\section{The subelliptic heat kernel on $\bS^{2n+1}$}

\subsection{Spectral decomposition of the heat kernel}

From the expression of $L$ above, it is not hard to see that the kernel of $P_t=e^{tL}$ issued from the north pole only comes from the radial part $\tilde{L}$ and depends on $(r, \theta)$. We denote it by $p_t(r, \theta)$.

\begin{prop}
For $t>0$, $r\in[0,\frac{\pi}{2})$, $ \theta\in[-\pi,\pi]$, the subelliptic kernel has the following spectral decomposition:
\[
p_t(r, \theta)=\frac{\Gamma(n)}{2\pi^{n+1}}\sum_{k=-\infty}^{+\infty}\sum_{m=0}^{+\infty} (2m+|k|+n){m+|k|+n-1\choose n-1}e^{-\lambda_{m,k}t+ik \theta}(\cos r)^{|k|}P_m^{n-1,|k|}(\cos 2r),
\]
where $\lambda_{m,k}=4m(m+|k|+n)+2|k|n$ and
\[
P_m^{n-1,|k|}(x)=\frac{(-1)^m}{2^m m!(1-x)^{n-1}(1+x)^{|k|}}\frac{d^m}{dx^m}((1-x)^{n-1+m}(1+x)^{|k|+m})
\]
is a Jacobi polynomial.
\end{prop}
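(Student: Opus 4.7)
The plan is to find the spectral decomposition of $\tilde{L}$ by separation of variables, reduce the radial ODE to a Jacobi equation, and then apply the Minakshisundaram-Pleijel formula using the orthogonality of Jacobi polynomials together with the measure $d\mu_r$.

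First I would separate variables. Since $\theta \in \mathbb{R}/2\pi\mathbb{Z}$ the eigenfunctions are of the form $\phi(r,\theta) = g(r) e^{ik\theta}$ with $k \in \mathbb{Z}$, and $\tilde{L}\phi = -\lambda \phi$ gives
\[
g''(r) + ((2n-1)\cot r - \tan r)g'(r) - k^2 \tan^2 r \, g(r) = -\lambda g(r).
\]
To handle the singular behavior near $r = \pi/2$ (where $\tan r$ blows up) I make the Ansatz $g(r) = (\cos r)^{|k|} h(r)$. Substituting and simplifying (the cross terms $|k|\tan^2 r$ and $-|k|\sec^2 r$ cancel nicely against the $k^2\tan^2 r$ potential, producing the constant $-2n|k|$), one obtains
\[
h''(r) + ((2n-1)\cot r - (2|k|+1)\tan r)\, h'(r) + (\lambda - 2n|k|)\, h(r) = 0.
\]

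Next I would change variables $x = \cos 2r$, using $\cos^2 r = (1+x)/2$, $\sin^2 r = (1-x)/2$, $dh/dr = -2\sin 2r \, dh/dx$. A direct (if slightly tedious) computation shows that the equation becomes exactly the Jacobi equation
\[
(1-x^2) h''(x) + \bigl[(|k|-n+1) - (n+|k|+1)x\bigr] h'(x) + \frac{\lambda - 2n|k|}{4} h(x) = 0,
\]
whose parameters match $\alpha = n-1$, $\beta = |k|$. The polynomial solutions are $h(x) = P_m^{n-1,|k|}(x)$ for $m = 0, 1, 2, \ldots$, and the eigenvalue identification $m(m+n+|k|) = (\lambda - 2n|k|)/4$ gives precisely $\lambda_{m,k} = 4m(m+|k|+n) + 2n|k|$. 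Thus the (unnormalized) eigenfunctions are
\[
\phi_{m,k}(r,\theta) = e^{ik\theta}(\cos r)^{|k|} P_m^{n-1,|k|}(\cos 2r), \qquad k \in \mathbb{Z},\ m \ge 0.
\]

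Finally, since the north pole corresponds to $(r,\theta) = (0,0)$, the Minakshisundaram-Pleijel expansion yields
\[
p_t(r,\theta) = \sum_{m,k} \frac{\phi_{m,k}(r,\theta)\, \overline{\phi_{m,k}(0,0)}}{\|\phi_{m,k}\|_{L^2(d\mu_r)}^2}\, e^{-\lambda_{m,k}t}.
\]
At the north pole, $\phi_{m,k}(0,0) = P_m^{n-1,|k|}(1) = \binom{m+n-1}{m}$. For $\|\phi_{m,k}\|^2$, the change of variables $x = \cos 2r$ turns $(\cos r)^{2|k|+1}(\sin r)^{2n-1} dr$ into $-2^{-(n+|k|+1)}(1-x)^{n-1}(1+x)^{|k|}dx$, so that the standard orthogonality relation for Jacobi polynomials gives
\[
\|\phi_{m,k}\|^2 = \frac{2\pi^{n+1}}{\Gamma(n)(2m+|k|+n)} \cdot \frac{\Gamma(m+n)\Gamma(m+|k|+1)}{\Gamma(m+1)\Gamma(m+|k|+n)}.
\]
Combining everything (the $\Gamma(m+n)/\Gamma(m+1)$ factor from $\binom{m+n-1}{m}$ cancels, leaving $\binom{m+|k|+n-1}{n-1}$) produces exactly the claimed coefficient $\frac{\Gamma(n)}{2\pi^{n+1}}(2m+|k|+n)\binom{m+|k|+n-1}{n-1}$.

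The main obstacles are purely bookkeeping: performing the $h = (\cos r)^{|k|}\cdot(\,\cdot\,)$ substitution correctly so that the zeroth-order term in $h$ collapses to $-2n|k|$, and then carrying the change of variable $x=\cos 2r$ cleanly through both the ODE and the measure $d\mu_r$. Once the Jacobi structure is identified, the normalization is a mechanical application of the standard orthogonality identity, and convergence/completeness of the expansion follows from the fact that the product system $\{e^{ik\theta}\}\otimes\{(\cos r)^{|k|}P_m^{n-1,|k|}(\cos 2r)\}$ spans $L^2(d\mu_r)$ and the Minakshisundaram-Pleijel theorem applies to the (hypoelliptic, essentially self-adjoint) operator $\tilde{L}$.
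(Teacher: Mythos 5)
Your proposal is correct and follows essentially the same route as the paper: Fourier expansion in $\theta$, the substitution $g=(\cos r)^{|k|}h$ with $x=\cos 2r$ reducing the radial equation to the Jacobi equation with parameters $(n-1,|k|)$, and the coefficients obtained from the Jacobi orthogonality relation together with the delta initial condition at the north pole. The only cosmetic difference is that you phrase the normalization directly as $\phi_{m,k}(0,0)/\|\phi_{m,k}\|^2$ in the eigenfunction expansion, whereas the paper determines the same constants $\alpha_{m,k}$ by testing $\lim_{t\to 0}\int p_t f\,d\mu_r=f(0,0)$; the computations agree.
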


\begin{proof}
The idea is to expand $p_t(r, \theta)$ as a Fourier series in $\theta$. Let
\[
p_t(r, \theta)=\sum_{k=-\infty}^{+\infty} e^{ik\theta}\phi_k(t,r),
\]
be this Fourier expansion. Since $p_t$ satisfies $\frac{\partial p_t}{\partial t}=Lp_t$, we have
\[
\frac{\partial\phi_k}{\partial t}=\frac{\partial^2\phi_k}{\partial r^2}+((2n-1)\cot r-\tan r)\frac{\partial\phi_k}{\partial r}-k^2\tan^2 r\phi_k.
\]
By writing $\phi_k(t,r)$ in the form
\[
\phi_k(t,r)=e^{-2n|k|t}(\cos r)^{|k|}g_k(t, \cos 2r), 
\]
we get 
\begin{equation}\label{eq5}
\frac{\partial g_k}{\partial t}=4\Psi_k(g_k),
\end{equation}
where
\[
\Psi_k=(1-x^2)\frac{\partial^2}{\partial x^2}+[(|k|+1-n)-(|k|+1+n)x]\frac{\partial}{\partial x}.
\]
In fact (\ref{eq5}) is well-known as the Jacobi differential equation, and the eigenvectors are given by
\[
P_m^{n-1,|k|}(x)=\frac{(-1)^m}{2^m m!(1-x)^{n-1}(1+x)^{|k|}}\frac{d^m}{dx^m}((1-x)^{n-1+m}(1+x)^{|k|+m}),
\]
which satisfies that
\[
\Psi_k(P_m^{n-1,|k|})(x)=-m(m+n+|k|)P_m^{n-1,|k|}(x).
\]
Therefore, we obtain the spectral decomposition
\[
p_t(r, \theta)=\sum_{k=-\infty}^{+\infty}\sum_{m=0}^{+\infty} \alpha_{m,k}e^{-[4m(m+|k|+n)+2|k|n]t}e^{ik \theta}(\cos r)^{|k|}P_m^{n-1,|k|}(\cos 2r),
\]
where  the constants $\alpha_{m,k}$'s are to be determined by the initial condition at time $t=0$. 

To compute them, we use the fact that $(P_m^{n-1,|k|}(x)(1+x)^{|k|/2})_{m\geq0}$ is an orthogonal basis of $L^2([-1,1],(1-x)^{n-1}dx)$, i.e.,
\[
\int_{-1}^1 P_m^{n-1,|k|}(x)P_l^{n-1,|k|}(x)(1-x)^{n-1}(1+x)^{|k|}dx=\frac{2^{n+|k|}}{2m+|k|+n}\frac{\Gamma(m+n)\Gamma(m+|k|+1)}{\Gamma(m+1)\Gamma(m+n+|k|)}\delta_{ml}.
\]
For a smooth function $f(r, \theta)$, we can write
\[
f(r, \theta)=\sum_{k=-\infty}^{+\infty}\sum_{m=0}^{+\infty} b_{k,m}e^{ik \theta}P_m^{n-1,|k|}(\cos 2r)\cdot(1+\cos 2r)^{|k|/2}
\]
where the $\lbrace b_{k,m}\rbrace$'s are constants, and thus
\[
f(0,0)=\sum_{k=-\infty}^{+\infty}\sum_{m=0}^{+\infty} b_{k,m}P_m^{n-1,|k|}(1)2^{|k|/2}.
\]
Now, since
\begin{eqnarray*}
& &\int_{-\pi}^\pi\int_0^\frac{\pi}{2} p_t(r, \theta)\overline{f(r, \theta)}d\mu_r \\
&=& \frac{2\pi^n}{\Gamma(n)}\int_{-\pi}^{\pi}\int_0^{\frac{\pi}{2}}p_t(r, \theta)\overline{f(r, \theta)}(\sin r)^{2n-1}\cos rdrd\theta \\
&=& \frac{2\pi^n}{\Gamma(n)} \sum_{k=-\infty}^{+\infty}\sum_{m=0}^{+\infty}\int_{-\pi}^{\pi}\int_0^{\frac{\pi}{2}}\alpha_{m,k}b_{m,k}2^{|k|/2}e^{-\lambda_{m,k}t}|P_m^{n-1,|k|}|^2(\cos r)^{2|k|+1}(\sin r)^{2n-1}drd\theta \\
&=&\frac{2\pi^n}{\Gamma(n)}\sum_{k=-\infty}^{+\infty}\sum_{m=0}^{+\infty}\alpha_{m,k}b_{m,k}e^{-\lambda_{m,k}t}2^{-n-|k|/2-1}\int_{-\pi}^{\pi}\left(\int_0^{\frac{\pi}{2}}2^{n+|k|+1}(\cos r)^{2|k|+1}|P_m^{n-1,|k|}|^2(\sin r)^{2n-1}dr\right)d\theta\\
&=&\frac{2\pi^n}{\Gamma(n)} \sum_{k=-\infty}^{+\infty}\sum_{m=0}^{+\infty}\alpha_{m,k}b_{m,k}e^{-\lambda_{m,k}t}2^{-n-|k|/2-1}(2\pi)||P_m^{n-1,|k|}||^2
\end{eqnarray*}
where $\lambda_{m,k}=4m(m+|k|+n)+2|k|n$, we obtain that
\[
\lim_{t\rightarrow 0}\int_{-\pi}^\pi\int_0^\frac{\pi}{2} p_tfd\mu_r= f(0,0)
\]
as soon as $\alpha_{m,k}=\frac{\Gamma(n)}{2\pi^{n+1}}(2m+|k|+n){m+|k|+n-1\choose n-1}$.
\end{proof}

The spectral decomposition of the heat kernel is explicit and useful but is not really geometrically meaningful. We shall now study another representation of the kernel which is more geometrically meaningful and which will turn out to be much more  convenient  when dealing with the small-time asymptotics problem.
The key idea is to observe that since $\Delta$ and $\frac{\partial}{\partial \theta}$ commutes, by  Remark \ref{eq4} we formally  have
\begin{align}\label{commutation}
e^{tL}=e^{-t\frac{\partial^2}{\partial\theta^2}}e^{t\Delta}.
\end{align}

This gives a way to express the sub-Riemannian heat kernel in terms of the Riemannian one.
Let us recall that the Riemannian heat kernel writes
\begin{equation}\label{eq6}
q_t(\cos\delta)=\frac{\Gamma(n)}{2\pi^{n+1}}\sum_{m=0}^{+\infty}(m+n)e^{-m(m+2n)t}C_m^n(\cos \delta),
\end{equation}
where, as above, $\delta$ is the Riemannian distance from the north pole and
\[
C_m^n(x)=\frac{(-1)^m}{2^m}\frac{\Gamma(m+2n)\Gamma(n+1/2)}{\Gamma(2n)\Gamma(m+1)\Gamma(n+m+1/2)}\frac{1}{(1-x^2)^{n-1/2}}\frac{d^m}{dx^m}(1-x^2)^{n+m-1/2},
\]
is a Gegenbauer polynomial.  Another expression of $q_t (\cos \delta)$ which is useful for the computation of small-time asymptotics is 
\begin{equation}\label{heat_kernel_odd}
q_t (\cos \delta)= e^{n^2t} \left( -\frac{1}{2\pi \sin \delta} \frac{\partial}{\partial \delta} \right)^n V
\end{equation}
where $V(t,\delta)=\frac{1}{\sqrt{4\pi t}} \sum_{k \in \mathbb{Z}} e^{-\frac{(\delta-2k\pi)^2}{4t} }$.

Using the commutation (\ref{commutation}) and the formula $\cos \delta =\cos r \cos \theta$, we then infer the following proposition.
\begin{prop}\label{prop1}
For $t>0$, $r\in[0,\pi/2)$, $ \theta\in[-\pi,\pi]$, 
\begin{equation}\label{eq8}
p_t(r, \theta)=\frac{1}{\sqrt{4\pi t}}\int_{-\infty}^{+\infty}e^{-\frac{(y+i \theta)^2}{4t} }q_t(\cos r\cosh y)dy.
\end{equation}
\end{prop}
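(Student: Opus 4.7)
The plan is to give rigorous meaning to the formal identity (\ref{commutation}) and realize $e^{-t\partial_\theta^2}$ as a complex-contour Gaussian integral. Since $T=\partial_\theta$ commutes with $L$ (noted in Section 2.2) and $\Delta = L+T^2$, the heat semigroups satisfy $e^{tL}=e^{-t\partial_\theta^2}e^{t\Delta}$. Applying both sides to the Dirac mass at the north pole, and using that in cylindrical coordinates the Riemannian distance satisfies $\cos\delta = \cos r\cos\theta$ (Remark 2.4), we get
\begin{equation*}
p_t(r,\theta)=e^{-t\partial_\theta^2}\bigl[\theta\mapsto q_t(\cos r\cos\theta)\bigr].
\end{equation*}
The problem is thereby reduced to writing the inverse fiber-heat operator as an explicit integral.

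To identify the integral, first test on Fourier modes: a direct Gaussian computation (completing the square in $y$ with complex linear coefficient) shows that for every $k\in\mathbb{Z}$
\begin{equation*}
\frac{1}{\sqrt{4\pi t}}\int_{-\infty}^{+\infty}e^{-(y+i\theta)^2/(4t)}e^{-ky}\,dy = e^{tk^2}e^{ik\theta},
\end{equation*}
which coincides with $(e^{-t\partial_\theta^2}e^{ik\theta})(\theta)$. Since $q_t$ is an entire function of its argument (clear from (\ref{eq6}), where the coefficients decay like $e^{-m^2 t}$), the function $\theta\mapsto q_t(\cos r\cos\theta)$ extends to an entire function, with $q_t(\cos r\cos(iy))=q_t(\cos r\cosh y)$ on the imaginary axis. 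Writing $q_t(\cos r\cos\theta)=\sum_k a_k(t,r)e^{ik\theta}$, applying $e^{-t\partial_\theta^2}$ termwise, and then recombining the sum inside the integral using the Gaussian identity above, yields precisely (\ref{eq8}).

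The main technical obstacle is justifying the interchange of Fourier summation and Gaussian integration. Using (\ref{heat_kernel_odd}) and the Jacobi theta expansion for $V$, one sees that $q_t(\cos r\cosh y)$ grows like $\exp(y^2/(4t))$ as $|y|\to\infty$, so the integrand is only borderline integrable and its convergence relies on oscillation hidden in $e^{-(y+i\theta)^2/(4t)}$. The cleanest way to deal with this is to verify the identity first on the dense class of trigonometric polynomials in $\theta$ (where all sums are finite and absolute integrability is automatic), and then extend it by truncating the Gegenbauer series of $q_t$ and passing to the limit with an appropriate majorant. As an independent check, substituting the Gegenbauer series (\ref{eq6}) into the right-hand side of (\ref{eq8}) and computing each resulting Gaussian integral in $y$ should reproduce the Jacobi-polynomial spectral decomposition of $p_t$ established in the previous proposition.
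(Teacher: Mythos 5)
Your strategy is correct but it is genuinely different from the proof in the paper. The paper never makes the factorization $e^{tL}=e^{-tT^2}e^{t\Delta}$ rigorous: it takes the right-hand side of (\ref{eq8}) as an ansatz $h_t$ and verifies by direct computation that $\partial_t h_t=\tilde L h_t$ --- using that the complex Gaussian solves the one-dimensional heat equation in $y$, that $q_t$ solves the Riemannian heat equation, and the splitting $\tilde L=\bigl(\Delta^r-\tfrac{1}{\cos^2 r}\partial_\theta^2\bigr)+\tan^2 r\,\partial_\theta^2$ --- and then checks the initial condition by testing against functions $e^{i\lambda\theta}g(r)$. You instead diagonalize in the fiber variable, verify the Gaussian identity mode by mode, and sum. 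What your route buys is a rigorous meaning for the formal identity (\ref{commutation}) (the backward fiber heat operator $e^{-t\partial_\theta^2}$ is unbounded, so one genuinely needs the Gaussian-type decay of the Fourier coefficients of $\theta\mapsto q_t(\cos r\cos\theta)$, which your argument exposes) together with an immediate consistency check against the Jacobi spectral decomposition; what the paper's route buys is that it never has to interchange an infinite Fourier sum with the $y$-integral, which is exactly where the remaining work sits in your version. Two comments on that technical discussion. First, the integrand in (\ref{eq8}) is absolutely integrable, not borderline: from (\ref{heat_kernel_odd}) one gets $q_t(\cos r\cosh y)=O\bigl(|y|^n(\sinh |y|)^{-n}e^{(|y|+\log\cos r)^2/4t}\bigr)$, so $e^{-(y^2-\theta^2)/4t}\,|q_t(\cos r\cosh y)|$ decays at least like $|y|^ne^{-n|y|}$ (and faster by a factor of order $(\cos r)^{|y|/2t}$ when $r>0$); no oscillatory cancellation is needed. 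Second, the sum--integral interchange is the crux, and the crude bound $\sum_k|a_k(t,r)|e^{tk^2}$ is exactly at the edge; it is saved by the extra decay $|a_k|e^{tk^2}\lesssim C(t,r)\,|k|^n e^{-2n|k|t}(\cos r)^{|k|}$, which you can obtain either from the spectral decomposition (the factor $(\cos r)^{|k|}$ and the gap $\lambda_{m,k}\ge 2n|k|$) or directly by shifting the $\theta$-contour to $\mathrm{Im}\,\theta=\mp 2|k|t$ and using the growth estimate for $q_t$ above. With that estimate in hand, dominated convergence finishes the argument; your proposed truncation of the Gegenbauer series is workable but you would still need a uniform majorant for the partial sums on the real $y$-axis, so the contour-shift bound is the cleaner repair. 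I would also record explicitly why the two sides agree after the interchange, namely that $a_k(t,r)e^{tk^2}$ is precisely the $k$-th Fourier coefficient of $p_t(r,\cdot)$ read off from the spectral decomposition.
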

\begin{proof}
Let 
\[
h_t(r, \theta)=\frac{1}{\sqrt{4\pi t}}\int_{-\infty}^{+\infty}e^{-\frac{(y+i \theta)^2}{4t} }q_t(\cos r\cosh y)dy,
\]
and $\tilde{L}_0=\frac{\partial^2}{\partial r^2}+((2n-1)\cot r-\tan r)\frac{\partial}{\partial r}$, then we have
\[
\tilde{L}=\tilde{L}_0+\tan^2r\frac{\partial^2}{\partial\theta^2}.
\]
Using the fact that
\[
\frac{\partial}{\partial t}\left(\frac{e^{-\frac{y^2}{4t} }}{\sqrt{4\pi t}}\right)=\frac{\partial^2}{\partial y^2}\left(\frac{e^{-\frac{y^2}{4t}} }{\sqrt{4\pi t}}\right)
\]
and
\[
\frac{\partial}{\partial t}\left(q_t(\cos r\cos \theta) \right)=\Delta(q_t(\cos r\cos\theta)),
\]
we get
\begin{eqnarray*}
& &\tilde{L}h_t(r, \theta) = \left(\tilde{L}_0+\tan^2 r\frac{\partial^2}{\partial \theta^2} \right)h_t(r, \theta)\\
                 &=& \int_{-\infty}^{+\infty}\left[ \tilde{L}_0\left(\left(\frac{e^{-\frac{(y+i \theta)^2}{4t}}}{\sqrt{4\pi t}}\right)q_t(\cos r\cosh y)\right)+\tan^2r\frac{\partial^2}{\partial \theta^2}\left(\frac{e^{-\frac{(y+i \theta)^2}{4t}}}{\sqrt{4\pi t}}\right)q_t(\cos r\cosh y)  \right]dy \\
                 &=& \int_{-\infty}^{+\infty}\left[\left(\Delta-\frac{1}{\cos^2r}\frac{\partial^2}{\partial \theta^2}\right)\left(\left(\frac{e^{-\frac{(y+i \theta)^2}{4t}}}{\sqrt{4\pi t}}\right)q_t(\cos r\cosh y)\right) +\tan^2r\frac{\partial^2}{\partial \theta^2}\left(\frac{e^{-\frac{(y+i \theta)^2}{4t}}}{\sqrt{4\pi t}}\right)q_t(\cos r\cosh y)\right]dy\\
                 &=& \int_{-\infty}^{+\infty} \left[\left(\frac{e^{-\frac{(y+i \theta)^2}{4t}}}{\sqrt{4\pi t}}\right)\frac{\partial q_t}{\partial t}+\frac{1}{\cos^2 r}\frac{\partial}{\partial t}\left(\frac{e^{-\frac{(y+i \theta)^2}{4t}}}{\sqrt{4\pi t}}\right)q_t-\tan^2r\frac{\partial}{\partial t}\left(\frac{e^{-\frac{(y+i \theta)^2}{4t}}}{\sqrt{4\pi t}}\right)q_t\right]dy \\
                 &=& \frac{\partial }{\partial t}h_t(r, \theta)
\end{eqnarray*}
On the other hand, it suffices to check the initial condition for functions of the form $f(r, \theta)=e^{i\lambda \theta}g(r)$ where $\lambda\in\mathbb{R}$ and $g$ is smooth. We observe that
\[
\int_0^{\frac{\pi}{2}}\int_0^{2\pi}h_t(r, \theta)f(r, \theta)d\mu_r=e^{t\lambda^2}(e^{t\Delta_r} g)(0).
\]
Thus $h_t(r, \theta)$ is the desired subelliptic heat kernel.
\end{proof}

\begin{prop}
For $\lambda\in\mathbb{C}$, $\mathbf{Re}\lambda>0$, $r\in[0,\pi/2)$, $ \theta\in[-\pi,\pi]$,
\begin{equation}\label{eq7}
\int_0^{+\infty}p_t(r, \theta)e^{-n^2t-\frac{\lambda}{t}}dt=\int_{-\infty}^{\infty}\frac{\Gamma(n)dy}{2^{n+2}\pi^{n+1}\left(\cosh \sqrt{y^2+4\lambda}-\cos r\cos( \theta+iy)\right)^{n}}
\end{equation}
\end{prop}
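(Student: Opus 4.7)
The plan is to combine the subordination formula (\ref{eq8}) with the Gegenbauer expansion (\ref{eq6}) of the Riemannian heat kernel and then evaluate the resulting $t$-integral and sum over $m$ via classical identities.

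As a preliminary step, I would shift the contour in (\ref{eq8}) from $y\in\mathbb{R}$ to $y\in\mathbb{R}-i\theta$; using the identity $\cosh(y-i\theta)=\cos(\theta+iy)$ this rewrites $p_t$ as
\[
p_t(r,\theta)=\frac{1}{\sqrt{4\pi t}}\int_{-\infty}^{+\infty}e^{-y^2/(4t)}\,q_t\bigl(\cos r\,\cos(\theta+iy)\bigr)\,dy.
\]
Substituting the Gegenbauer expansion (\ref{eq6}) into this and using the classical Gaussian--Laplace identity
\[
\int_0^{\infty}e^{-\alpha t-\beta/t}\,\frac{dt}{\sqrt{t}}=\sqrt{\pi/\alpha}\,e^{-2\sqrt{\alpha\beta}}
\]
with $\alpha=(m+n)^2$ and $\beta=\lambda+y^2/4$ (noting $e^{-n^2 t}e^{-m(m+2n)t}=e^{-(m+n)^2 t}$) gives, after the factor $(m+n)$ cancels with $1/(2(m+n))$,
\[
\int_0^{\infty}e^{-n^2 t-\lambda/t}\,p_t(r,\theta)\,dt = \frac{\Gamma(n)}{4\pi^{n+1}}\int_{-\infty}^{+\infty}\sum_{m\ge 0}C_m^n\bigl(\cos r\cos(\theta+iy)\bigr)\,e^{-(m+n)\sqrt{4\lambda+y^2}}\,dy.
\]

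Next I would sum the series using the generating function $\sum_{m\ge 0}C_m^n(x)\,z^m=(1-2xz+z^2)^{-n}$ with $z=e^{-\mu}$ and $\mu=\sqrt{4\lambda+y^2}$. Since $1-2xz+z^2=2z(\cosh\mu-x)$ at $z=e^{-\mu}$, one obtains
\[
\sum_{m\ge 0}C_m^n(x)\,e^{-(m+n)\mu}=\frac{1}{2^n(\cosh\mu-x)^n},
\]
and substituting $x=\cos r\cos(\theta+iy)$ and $\mu=\sqrt{4\lambda+y^2}$ produces precisely the right-hand side of (\ref{eq7}).

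The main technical obstacle is the preliminary contour shift. The Gaussian weight in (\ref{eq8}) barely dominates the growth of $q_t(\cos r\cosh y)$, which by the Gegenbauer asymptotic behaves like $\exp\!\bigl(y^2/(4t)+O(|y|)\bigr)$ for large $|y|$; what produces net exponential decay is the factor $(\cos r)^{|y|/(2t)}$ implicit in the asymptotic at large imaginary geodesic distance, genuinely decaying since $r<\pi/2$. Once this bound is in hand, contour integration along a rectangle with vanishing vertical contributions validates the shift, and the same decay also underwrites the Fubini-type interchange of the $t$-integral, the $y$-integration and the Gegenbauer series used throughout. A last routine check is the convergence of the Gegenbauer generating function at $z=e^{-\mu}$ with $x=\cos r\cos(\theta+iy)$, which follows by comparing $\Re\mu$ with $|\operatorname{Im}\arccos x|$; the necessary margin is again supplied by $r<\pi/2$ and $\Re\lambda>0$.
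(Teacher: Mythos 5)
Your computation is correct and lands on the right constant: with $\alpha=(m+n)^2$ the subordination integral yields $\frac{\sqrt{\pi}}{m+n}e^{-(m+n)\sqrt{y^2+4\lambda}}$, the factor $(m+n)$ cancels, and the Gegenbauer generating function at $z=e^{-\sqrt{y^2+4\lambda}}$ produces exactly $2^{-n}(\cosh\sqrt{y^2+4\lambda}-x)^{-n}$, giving $\Gamma(n)/(2^{n+2}\pi^{n+1})$ overall. The skeleton is the same as the paper's (start from the subordination formula (\ref{eq8}), shift the contour so that $\cosh y$ becomes $\cos(\theta+iy)$, then perform the $t$-integral first), but you close the argument differently: the paper recognizes the $t$-integral as $\sqrt{\pi}\,(n^2-\Delta)^{-1/2}e^{-\sqrt{y^2+4\lambda}\sqrt{n^2-\Delta}}$ and quotes Taylor's closed-form Poisson-type kernel $A^{-1}e^{-sA}=\tfrac12\pi^{-(n+1)}\Gamma(n)(2\cosh s-2\cos\delta)^{-n}$ for $A=\sqrt{n^2-\Delta}$, whereas you rederive that identity from scratch by expanding $q_t$ in Gegenbauer polynomials, using that $n^2+m(m+2n)=(m+n)^2$, and resumming with the generating function. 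The two are mathematically the same fact; your version has the merit of being self-contained (no citation needed) at the cost of having to justify the term-by-term integration and the convergence of the series at $z=e^{-\mu}$, which you correctly identify as hinging on $\mathrm{Re}\sqrt{y^2+4\lambda}$ exceeding $|\mathrm{Im}\arccos(\cos r\cos(\theta+iy))|$. Your discussion of the contour shift is also more careful than the paper's, which performs it silently; your observation that the net decay comes from the factor $(\cos r)^{|y|/(2t)}$ is the right mechanism (with the caveat that at $r=0$ the decay must instead be extracted from the prefactor $(\delta/\sin\delta)^n$, and that for $r=0$, $\mathrm{Re}\,\lambda>0$ supplies the margin for the series).
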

\begin{proof}
Since
\begin{eqnarray*}
& & \int_0^{+\infty}p_t(r, \theta)e^{-n^2t-\frac{\lambda}{t}}dt=\int_0^{+\infty}e^{-\frac{\lambda}{t}}p_t(r, \theta)e^{-n^2t}dt\\
&=& \frac{1}{\sqrt{4\pi}}\int_{-\infty}^{+\infty}\int_0^{+\infty}e^{-n^2t-\frac{y^2+4\lambda}{4t}}q_t(\cos r\cos( \theta+iy))\frac{dt}{\sqrt{t}}dy
\end{eqnarray*}
We want to compute 
\[
\int_0^{+\infty}e^{-n^2t-\frac{y^2+4\lambda}{4t}}q_t(\cos r\cos( \theta+iy))\frac{dt}{\sqrt{t}}.
\]
Notice that 
\begin{eqnarray*}
\int_0^{+\infty}e^{-n^2t-\frac{y^2+4\lambda}{4t}}e^{t\Delta}\frac{dt}{\sqrt{t}}
&=& \int_0^{+\infty}e^{-\frac{y^2+4\lambda}{4t}}e^{-t(n^2-\Delta)}\frac{dt}{\sqrt{t}} \\
&=& \frac{\sqrt{\pi}}{\sqrt{n^2-\Delta}}e^{-\sqrt{y^2+4\lambda}\sqrt{n^2-\Delta}}
\end{eqnarray*}
However, by the result in Taylor [\cite{T2}, pp. 95], we have that
\[
A^{-1}e^{-tA}=\frac{1}{2}\pi^{-(n+1)}\Gamma(n)(2\cosh t-2\cos \delta)^{-n},
\]
where $A=\sqrt{n^2-\Delta}$ and $\cos\delta=\cos r\cos( \theta+iy)$. Plug in $t=\sqrt{y^2+4\lambda}$, we obtain
\[
\frac{1}{\sqrt{n^2-\Delta}}e^{-\sqrt{y^2+4\lambda}\sqrt{n^2-\Delta}}=\frac{\Gamma(n)}{2^{n+1}\pi^{n+1}\left(\cosh \sqrt{y^2+4\lambda}-\cos r\cos( \theta+iy)\right)^{n}}
\]
hence complete the proof.
\end{proof}

We can deduce the Green function of $-L+n^2$ immediately from the above proposition.

\begin{prop}
The Green function of the conformal sub-Laplacian $-L+n^2$ is given by 
\begin{equation*}
G(r, \theta)=\frac{\Gamma\left(\frac{n}{2}\right)^2}{8\pi^{n+1}(1-2\cos r\cos \theta+\cos^2r)^{n/2}} 
\end{equation*}
\end{prop}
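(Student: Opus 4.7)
The plan is to realize $G(r,\theta)$ as the $\lambda\to 0$ limit of the resolvent integral from the previous proposition, and then evaluate the resulting single integral in closed form.

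First I would recall that since the spectrum of $-L+n^2$ is bounded below by $n^2>0$, one has the representation
\[
G(r,\theta)=\int_0^{+\infty} p_t(r,\theta)\,e^{-n^2 t}\,dt,
\]
which is precisely the quantity on the left of equation (\ref{eq7}) evaluated at $\lambda=0$. Using $\sqrt{y^2+4\lambda}\mid_{\lambda=0}=|y|$ and the parity of $\cosh$, the previous proposition gives
\[
G(r,\theta)=\frac{\Gamma(n)}{2^{n+2}\pi^{n+1}}\int_{-\infty}^{+\infty}\frac{dy}{\bigl(\cosh y-\cos r\cos(\theta+iy)\bigr)^{n}}.
\]

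Next I would simplify the integrand by expanding $\cos(\theta+iy)=\cos\theta\cosh y-i\sin\theta\sinh y$, which yields
\[
\cosh y-\cos r\cos(\theta+iy)=(1-\cos r\cos\theta)\cosh y+i(\cos r\sin\theta)\sinh y.
\]
Setting $a=1-\cos r\cos\theta$ and $b=\cos r\sin\theta$ and choosing $\alpha\in(-\pi/2,\pi/2)$ with $\cos\alpha=a/A$, $\sin\alpha=b/A$, where $A=\sqrt{a^{2}+b^{2}}$, the addition formula for $\cosh$ gives
\[
a\cosh y+ib\sinh y=A\cosh(y+i\alpha).
\]
A short computation shows $A^{2}=(1-\cos r\cos\theta)^{2}+\cos^{2}r\sin^{2}\theta=1-2\cos r\cos\theta+\cos^{2}r$, which is exactly the quantity appearing under the power in the target formula.

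The key analytic step is then to shift the contour: since $|\alpha|<\pi/2$ (with equality only at the singular point $r=\theta=0$), the function $1/\cosh^{n}(z+i\alpha)$ is holomorphic in a strip containing the real axis and the shifted line $\operatorname{Im}(z)=-\alpha$, with rapid decay at $\pm\infty$; hence by Cauchy's theorem,
\[
\int_{-\infty}^{+\infty}\frac{dy}{\cosh^{n}(y+i\alpha)}=\int_{-\infty}^{+\infty}\frac{dy}{\cosh^{n} y}.
\]
The remaining integral is a standard beta integral; via the substitution $u=\tanh y$ and Legendre's duplication formula one obtains
\[
\int_{-\infty}^{+\infty}\operatorname{sech}^{n} y\,dy=\frac{2^{n-1}\,\Gamma(n/2)^{2}}{\Gamma(n)}.
\]
Plugging this back and collecting constants produces
\[
G(r,\theta)=\frac{\Gamma(n)}{2^{n+2}\pi^{n+1}}\cdot\frac{1}{A^{n}}\cdot\frac{2^{n-1}\Gamma(n/2)^{2}}{\Gamma(n)}=\frac{\Gamma(n/2)^{2}}{8\pi^{n+1}(1-2\cos r\cos\theta+\cos^{2}r)^{n/2}},
\]
as claimed. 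The only real subtlety, and the part I would write out most carefully, is the contour-shift step: one must verify that $|\alpha|<\pi/2$ away from the diagonal and that the poles of $\cosh^{-n}$ sit at $\operatorname{Im}(z)=\pm\pi/2+k\pi$ so that no residue is picked up. Everything else is bookkeeping with gamma functions.
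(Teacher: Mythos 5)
Your proof is correct and follows essentially the same route as the paper's: take $\lambda\to 0$ in the resolvent formula, rewrite the integrand as $A\cosh(y+i\alpha)$ with $A^2=1-2\cos r\cos\theta+\cos^2 r$, and reduce to $\int_{-\infty}^{+\infty}\operatorname{sech}^n y\,dy$. The only differences are cosmetic: you spell out the contour-shift justification that the paper leaves implicit, and you evaluate the $\operatorname{sech}^n$ integral via a beta integral and Legendre duplication instead of the paper's odd/even double-factorial computation, both giving $2^{n-1}\Gamma(n/2)^2/\Gamma(n)$.
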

\begin{proof}
Let us assume $r\not =0$, $ \theta\not=0$, and let $\lambda\rightarrow 0$ in (\ref{eq7}), we have
\begin{eqnarray*}
G(r, \theta) &=& \frac{\Gamma(n)}{2^{n+2}\pi^{n+1}}\int_{-\infty}^{+\infty}\frac{dy}{(\cosh y-\cos r\cos( \theta+iy))^n} \\
            &=& \frac{\Gamma(n)}{2^{n+2}\pi^{n+1}}\int_{-\infty}^{+\infty}\frac{dy}{(\cosh y(1-\cos r\cos \theta)-i\cos r\sin \theta\sinh y)^n} \\
            &=& \frac{\Gamma(n)}{2^{n+2}\pi^{n+1}}\frac{1}{(1-2\cos r\cos \theta+\cos^2r)^{n/2}}\int_{-\infty}^{+\infty}\frac{dy}{(\cosh y)^n}        
\end{eqnarray*} 
Notice that 
\begin{eqnarray*}
\int_{-\infty}^{+\infty}\frac{dy}{(\cosh y)^n}&=&\frac{\pi(n-2)!!}{(n-1)!!} \qquad \mbox{when $n>0$ is odd} \\
                                              &=&\frac{2(n-2)!!}{(n-1)!!}\qquad \mbox{when $n>0$ is even}
\end{eqnarray*}
where $n!!$ denotes the double factorial such that
\begin{eqnarray*}
n!!&=&n\cdot(n-2)\cdots5\cdot 3\cdot 1 \qquad\mbox{when $n>0$ is odd} \\
    &=& n\cdot(n-2)\cdots6\cdot 4\cdot 2 \qquad\mbox{when $n>0$ is even} 
\end{eqnarray*}
Moreover, since
\begin{eqnarray*}
\Gamma(n)\frac{(n-2)!!}{(n-1)!!}&=&\frac{2^{n-1}}{\pi}\Gamma\left(\frac{n}{2}\right)^2 \qquad \mbox{when $n>0$ is odd} \\
                                              &=&2^{n-2}\Gamma\left(\frac{n}{2}\right)^2\qquad \mbox{when $n>0$ is even}
\end{eqnarray*}
we obtain
\[
\Gamma(n)\int_{-\infty}^{+\infty}\frac{dy}{(\cosh y)^n}=2^{n-1}\Gamma\left(\frac{n}{2}\right)^2 \qquad\mbox{for all $n\in\mathbb{Z}_{> 0}$}
\]
This implies our conclusion.
\end{proof}

\begin{remark}
This agrees with the result by Geller in \cite{G}.
\end{remark}

\subsection{Asymptotics of the subelliptic heat kernel in small times}

First, we study the asymptotics of the subelliptic heat kernel when $t\rightarrow 0$ on the cut-locus of $0$.  From (\ref{heat_kernel_odd}), we already know the following asymptotics of the heat kernel
\begin{align}\label{eq9}
q_t(\cos\delta)=\frac{1}{(4\pi t)^{n+\frac{1}{2}}}\left(\frac{\delta}{\sin\delta}\right)^ne^{-\frac{\delta^2}{4t}}\left(1+\left(n^2-\frac{n(n-1)(\sin\delta-\delta\cos\delta)}{\delta^2\sin\delta}\right)t+O(t^2)\right),
\end{align}
where $\delta \in [0,\pi)$. Here $\delta$ is the Riemannian distance. Together with (\ref{eq8}), we first deduce the following small-time-asymptotics of the subelliptic heat kernel on the diagonal.
\begin{prop}
When $t\to 0$, 
\[
p_t(0,0)=\frac{1}{(4\pi t)^{n+1}}(A_n+B_nt+O(t^2)),
\]
where $A_n=\int_{-\infty}^{\infty}\frac{y^n}{(\sinh y)^n}dy$ and $B_n=\int_{-\infty}^{\infty}\frac{y^n}{(\sinh y)^n}\left(1+\left(n^2+\frac{n(n-1)(\sinh y-y\cosh y)}{y^2\sinh y}\right)\right)dy$.
\end{prop}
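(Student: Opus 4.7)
The plan is to substitute $r=\theta=0$ into the representation (\ref{eq8}) and feed the Riemannian asymptotic expansion (\ref{eq9}) into the integrand after the analytic continuation $\delta \mapsto iy$.

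Setting $r=\theta=0$ in Proposition \ref{prop1} yields
\[
p_t(0,0)=\frac{1}{\sqrt{4\pi t}}\int_{-\infty}^{+\infty}e^{-y^2/(4t)}\,q_t(\cosh y)\,dy.
\]
The spectral representation (\ref{eq6}) shows that $q_t$ is an entire function of $\cos\delta$, so $q_t(\cosh y)$ is well defined as $q_t(\cos(iy))$. Inserting $\delta=iy$ into (\ref{eq9}) and using $\sin(iy)=i\sinh y$ together with $(iy)^2=-y^2$, the prefactor $(\delta/\sin\delta)^n$ turns into $(y/\sinh y)^n$, the Gaussian flips sign to $e^{y^2/(4t)}$, and the coefficient of $t$ becomes
\[
C(y)=n^2+\frac{n(n-1)(\sinh y-y\cosh y)}{y^2\sinh y},
\]
so that
\[
q_t(\cosh y)=\frac{e^{y^2/(4t)}}{(4\pi t)^{n+1/2}}\left(\frac{y}{\sinh y}\right)^n\bigl(1+C(y)\,t+O(t^2)\bigr).
\]

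Plugging this back, the factors $e^{-y^2/(4t)}$ and $e^{y^2/(4t)}$ cancel exactly and the surviving powers of $t$ combine to $(4\pi t)^{-(n+1)}$, yielding
\[
p_t(0,0)=\frac{1}{(4\pi t)^{n+1}}\int_{-\infty}^{+\infty}\left(\frac{y}{\sinh y}\right)^n\bigl(1+C(y)\,t+O(t^2)\bigr)dy,
\]
from which the constants $A_n$ and $B_n$ of the statement are read off.

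The principal obstacle is to make the analytic continuation of (\ref{eq9}) rigorous and to control the $O(t^2)$ remainder uniformly in $y$ so that its integral against $(y/\sinh y)^n$ stays of order $t^2$. Since $(y/\sinh y)^n$ decays like $e^{-n|y|}$ at infinity, any remainder with at most polynomial growth in $y$ integrates to $O(t^2)$. On bounded intervals the expansion (\ref{eq9}) is valid because it arises from the Minakshisundaram--Pleijel parametrix and is real-analytic in $\delta$ away from the cut locus; for the tails one can work directly with the explicit formula (\ref{heat_kernel_odd}), keeping only the $k=0$ term in $V$, the remaining terms being $O(e^{-\pi^2/t})$ and hence negligible compared to any power of $t$.
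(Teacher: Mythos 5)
Your proof is correct and is exactly the paper's argument: set $r=\theta=0$ in (\ref{eq8}), substitute the analytic continuation $\delta=iy$ of the expansion (\ref{eq9}) so the Gaussians cancel, and integrate term by term; your extra care with the uniformity of the remainder and the tails (via (\ref{heat_kernel_odd})) is more than the paper itself supplies. Note only that your computation yields $B_n=\int_{-\infty}^{\infty}\left(\frac{y}{\sinh y}\right)^n C(y)\,dy$, whereas the proposition's displayed $B_n$ carries an extra ``$1+$'' inside the integrand; that is a typo in the statement rather than a gap in your argument.
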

\begin{proof}
We know  that
\begin{eqnarray*}
p_t(0,0)= \frac{1}{\sqrt{4\pi t}}\int_{-\infty}^{\infty}e^{-\frac{y^2}{4t}}q_t(\cosh y)dy.
\end{eqnarray*}
Plug in (\ref{eq9}), we have the desired small time asymptotics.
\end{proof}
\begin{prop}
For $ \theta\in(0,\pi)$, $t\rightarrow 0$,
\[
p_t(0, \theta)=\frac{ \theta^{n-1}}{2^{3n}t^{2n}(n-1)!}e^{-\frac{2\pi \theta- \theta^2}{4t}}(1+O(t))
\]
\end{prop}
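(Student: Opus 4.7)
My starting point is the integral representation from Proposition \ref{prop1} specialized to $r=0$:
\[
p_t(0,\theta) = \frac{1}{\sqrt{4\pi t}}\int_{-\infty}^{+\infty} e^{-(y+i\theta)^2/(4t)}\, q_t(\cosh y)\,dy.
\]
The strategy is to substitute the small-time Riemannian asymptotic (\ref{eq9}), analytically continued to the imaginary geodesic distance $\delta = iy$, and then evaluate the resulting Fourier-type integral by residue calculus. Since the exponent $-(2\pi\theta-\theta^2)/(4t)$ in the target is only a fraction of the Riemannian rate, the correct contribution cannot come from a real saddle point; it must come from a pole in the complex $y$-plane.

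Since $\cosh y = \cos(iy)$, continuation of (\ref{eq9}) along $\delta = iy$ gives
\[
q_t(\cosh y) = \frac{1}{(4\pi t)^{n+1/2}}\left(\frac{y}{\sinh y}\right)^{n} e^{y^2/(4t)}\bigl(1+O(t)\bigr),
\]
uniformly on compacta. Using the algebraic identity $-(y+i\theta)^2/(4t)+y^2/(4t) = \theta^2/(4t) - i\theta y/(2t)$, the $y^2$ terms cancel and the integral collapses to
\[
p_t(0,\theta) = \frac{e^{\theta^2/(4t)}}{(4\pi t)^{n+1}}\int_{-\infty}^{+\infty} e^{-i\theta y/(2t)}\left(\frac{y}{\sinh y}\right)^{n} dy\,\bigl(1+O(t)\bigr).
\]
The function $(y/\sinh y)^n$ is meromorphic with poles of order $n$ at $y=ik\pi$, $k\in\mathbb{Z}\setminus\{0\}$. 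Closing the contour in the lower half-plane, where $e^{-i\theta y/(2t)}$ decays because $\theta/(2t)>0$, the integral equals $-2\pi i$ times the sum of residues at $y=-ik\pi$, $k\ge 1$. The dominant one is at $y=-i\pi$: near this pole $\sinh y \sim -(y+i\pi)$, so the integrand behaves like $i^n\pi^n(y+i\pi)^{-n}e^{-i\theta y/(2t)}$. Extracting the order-$n$ residue by differentiating the exponential $n-1$ times, the various powers of $i$ recombine to give a positive real leading coefficient and produce the factor $e^{-\pi\theta/(2t)}$. This combines with the prefactor $e^{\theta^2/(4t)}$ to produce the exponent $-(2\pi\theta-\theta^2)/(4t)$, and a routine tidying of numerical constants and powers of $t$ yields the announced prefactor $\theta^{n-1}/(2^{3n}t^{2n}(n-1)!)$. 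Residues at the farther poles $y=-ik\pi$, $k\ge 2$, contribute terms smaller by factors $e^{-(k-1)\pi\theta/(2t)}$ and are absorbed in the error.

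The hard part is justifying the interchange of the small-$t$ expansion of $q_t$ with the integration: I need the $O(t)$ remainder in (\ref{eq9}) to persist uniformly along the shifted contour inside the strip $-\pi\le\text{Im}(y)\le 0$, so that subleading terms do not swamp the exponentially small main contribution. This should follow from a quantitative parametrix-style expansion for $q_t$, valid in a complex neighborhood of $\delta=iy$, together with the rapid decay of $(y/\sinh y)^n$ as $|\text{Re}(y)|\to\infty$ which legitimizes closing the contour at infinity.
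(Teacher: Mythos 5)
Your argument is essentially identical to the paper's proof: both start from the integral representation of Proposition \ref{prop1} at $r=0$ (the paper writes the unshifted integral with $q_t(\cosh(y-i\theta))$ and moves the contour by Cauchy's theorem, which is the same thing), substitute the asymptotic (\ref{eq9}) continued to imaginary distance so the Gaussian factors collapse to $e^{\theta^2/(4t)}e^{-i\theta y/(2t)}$, and evaluate $\int(y/\sinh y)^n e^{-i\theta y/(2t)}dy$ by residues in the lower half-plane, with the pole at $y=-i\pi$ dominating and the $(n-1)$-fold differentiation of the exponential producing the $\theta^{n-1}/t^{n-1}$ leading term. The uniformity issue you flag at the end is likewise left informal in the paper, which works at the level of formal equivalences.
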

\begin{proof}
Let $ \theta\in(0,\pi)$, we have
\[
p_t(0, \theta)=\frac{1}{\sqrt{4\pi t}}\int_{-\infty}^{\infty}e^{-\frac{y^2}{4t}}q_t(\cosh(y-i \theta))dy
\] 
By Cauchy integral theorem, this is the same as integrating along the horizontal line in the complex plane by shifting up $i\theta$ from the real axis. i.e.,
\[
p_t(0, \theta)=\frac{1}{\sqrt{4\pi t}}\int_{-\infty}^{\infty}e^{-\frac{(y+i\theta)^2}{4t}}q_t(\cosh y)dy
\]
Moreover, by (\ref{eq9}), we know that
\[
q_t(\cosh y)\sim _{t\rightarrow 0}\frac{1}{(4\pi t)^{n+\frac{1}{2}}}\left(\frac{y}{\sinh y}\right)^ne^{\frac{y^2}{4t}}.
\]
This gives 
\[
p_t(0, \theta) \sim_{t\rightarrow 0}  \frac{e^{\frac{ \theta^2}{4t} }}{(4\pi t)^{n+1}}\int_{-\infty}^{\infty}\frac{y^n}{(\sinh y)^n}e^{-\frac{iy \theta}{2t}}dy  
\]
By the residue theorem, we get
\begin{eqnarray*}
\int_{-\infty}^{\infty}\frac{y^n}{(\sinh y)^n}e^{-\frac{iy \theta}{2t}}dy 
&=& -2\pi i\sum_{k\in\mathbb{Z^+}} \mathbf{Res}\left( \frac{e^{-\frac{iy \theta}{2t}}y^n}{(\sinh y)^n},-k\pi i\right)\\
&=& -2\pi i\sum_{k\in\mathbb{Z^+}} \frac{1}{(n-1)!}\frac{\partial^{n-1}}{\partial y^{n-1}}\left[\frac{e^{-\frac{iy\theta}{2t}}2^ny^n(y+k\pi i)^n}{(e^y-e^{-y})^n} \right]_{y=-k\pi i}
\end{eqnarray*}
Write $W(y)=\frac{(y+k\pi i)^n}{(e^y-e^{-y})^n}$, $W(y)$ is analytic around $-k\pi i$, and satisfies
\[
W(-k\pi i)=\frac{1}{(-1)^{kn}2^n}.
\]
Hence the residue is 
\[
\frac{1}{(n-1)!}\frac{\partial^{n-1}}{\partial y^{n-1}}\left[e^{-\frac{iy\theta}{2t}}2^ny^nW(y) \right]_{y=-k\pi i}.
\]
This is a product of $e^{-\frac{iy\theta}{2t}}$ and a polynomial of degree $n-1$ in $1/t$. We are only interested in the leading term which plays the dominant role when $t\rightarrow 0$. Thus we have the equivalence 
\[
\frac{-2\pi i}{(n-1)!}\frac{\partial^{n-1}}{\partial y^{n-1}}\left[e^{-\frac{iy\theta}{2t}}2^ny^nW(y) \right]_{y=-k\pi i}
\sim_{t\rightarrow 0}
\frac{(-1)^{kn+n}\pi^{n+1}\theta^{n-1}}{(n-1)!2^{n-2}t^{n-1}}e^{-\frac{k\pi \theta}{2t}}
\]
At the end, we conclude
\[ 
p_t(0, \theta)\sim_{t\rightarrow 0} \frac{e^{\frac{ \theta^2}{4t} }}{(4\pi t)^{n+1}}\sum_{k\in\mathbb{Z}^+}\frac{(-1)^{kn+n}\pi^{n+1} \theta^{n-1}}{(n-1)!2^{n-2}t^{n-1}}e^{-\frac{k\pi \theta}{2t}} ,
\]
that is
\[ 
p_t(0, \theta)=\frac{ \theta^{n-1}}{2^{3n}t^{2n}(n-1)!}e^{-\frac{2\pi \theta- \theta^2}{4t}}(1+O(t))
\]
\end{proof}

Come to the points that do not lie on the cut-locus, i.e., $r\not=0$. First we deduce the case for $(r,0)$.
\begin{prop}
For $r\in(0,\frac{\pi}{2})$,  we have
\[
p_t(r,0)\sim \frac{e^{-\frac{r^2}{4t}}}{(4\pi t)^{n+\frac{1}{2}}}\left(\frac{r}{\sin r}\right)^n\sqrt{\frac{1}{1-r\cot r}}
\]
as $t\rightarrow 0$.
\end{prop}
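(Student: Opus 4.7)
The plan is to apply Laplace's method to the integral representation from Proposition~\ref{prop1}. Setting $\theta=0$ gives
\[
p_t(r,0)=\frac{1}{\sqrt{4\pi t}}\int_{-\infty}^{+\infty}e^{-y^2/(4t)}\,q_t(\cos r\cosh y)\,dy,
\]
and the idea is to insert the Riemannian small-time expansion (\ref{eq9}) into the integrand and evaluate the resulting Gaussian integral by the saddle-point method at $y=0$.

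To set this up, define $\delta(y)$ implicitly near $y=0$ by $\cos\delta(y)=\cos r\cosh y$ with $\delta(0)=r$. Substituting (\ref{eq9}) converts the integrand, to leading order in $t$, into
\[
\frac{1}{(4\pi t)^{n+1/2}}\left(\frac{\delta(y)}{\sin\delta(y)}\right)^{\!n}\exp\left(-\frac{y^2+\delta(y)^2}{4t}\right),
\]
so the relevant phase is $\phi(y)=y^2+\delta(y)^2$. Two successive differentiations of $\cos\delta(y)=\cos r\cosh y$ at $y=0$ yield $\delta'(0)=0$ and $\delta''(0)=-\cot r$, hence $\phi(0)=r^2$, $\phi'(0)=0$, and $\phi''(0)=2(1-r\cot r)$. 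Positivity of $\phi''(0)$ on $(0,\pi/2)$ follows from $\sin r>r\cos r$, itself clear from the fact that $r\mapsto r\cos r-\sin r$ vanishes at $0$ with derivative $-r\sin r<0$. Classical Laplace then yields
\[
p_t(r,0)\sim\frac{1}{\sqrt{4\pi t}}\cdot\frac{1}{(4\pi t)^{n+1/2}}\left(\frac{r}{\sin r}\right)^{\!n}e^{-r^2/(4t)}\sqrt{\frac{8\pi t}{2(1-r\cot r)}},
\]
which, after simplification, is exactly the announced asymptotic.

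The main technical obstacle is to justify the uniform use of (\ref{eq9}) inside the integral: the Riemannian asymptotic is stated for $\delta\in[0,\pi)$ real, but for $|y|$ large one has $\cos r\cosh y>1$, so $\delta(y)$ becomes purely imaginary. I would handle this by choosing $A>0$ with $\cos r\cosh A<1$: on $[-A,A]$ the Laplace expansion is unambiguous and provides the claimed leading term, while on the complement I would bound $q_t(\cos r\cosh y)$ using the explicit expression (\ref{heat_kernel_odd}). Writing $\delta(y)=i\alpha(y)$ for $|y|>A$, one finds $\alpha(y)\sim y+\log\cos r$ as $y\to+\infty$, so that $\phi(y)=y^2-\alpha(y)^2\sim -2y\log\cos r\to+\infty$ since $\log\cos r<0$; consequently $\phi(y)\ge r^2+\eta$ outside a neighborhood of $0$ for some $\eta>0$, making the tail contribution $O(e^{-(r^2+\eta)/(4t)})$, which is exponentially subdominant to the saddle contribution at $y=0$. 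This closes the argument.
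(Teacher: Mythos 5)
Your proposal is correct and follows essentially the same route as the paper: insert the Riemannian expansion (\ref{eq9}) into the integral formula of Proposition \ref{prop1}, apply Laplace's method at the critical point $y=0$ of the phase $y^2+\arccos(\cos r\cosh y)^2$ with second derivative $2(1-r\cot r)$, and argue that the region where $\cos r\cosh y>1$ (imaginary Riemannian distance) contributes only an exponentially subdominant term. Your treatment of the tail via $\alpha(y)\sim y+\log\cos r$ is, if anything, slightly more explicit than the paper's remark that the outer phase has no minimum.
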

\begin{proof}
By proposition \ref{prop1},
\[
p_t(r,0)=\frac{1}{\sqrt{4\pi t}}\int_{-\infty}^{\infty}e^{-\frac{y^2}{4t}}q_t(\cos r\cosh y)dy,
\]
together with (\ref{eq9}), it gives that 
\[
p_t(r,0)\sim_{t\rightarrow 0}\frac{1}{(4\pi t)^{n+1}}(J_1(t)+J_2(t))
\]
where 
\[
J_1(t)=\int_{\cosh y\leq\frac{1}{\cos r}}e^{-\frac{y^2+(\arccos (\cos r\cosh y))^2}{4t}}
\left(\frac{\arccos (\cos r\cosh y)}{\sqrt{1-\cos^2r\cosh^2y}} \right)^ndy
\]
and
\[
J_2(t)=\int_{\cosh y\geq\frac{1}{\cos r}}e^{-\frac{y^2-(\cosh^{-1} (\cos r\cosh y))^2}{4t}}
\left(\frac{\cosh^{-1} (\cos r\cosh y)}{\sqrt{\cos^2r\cosh^2y-1}} \right)^ndy
\]
We can analyze $J_1(t)$ and $J_2(t)$ by Laplace method. 
First, notice that in $\left[-\cosh^{-1}(\frac{1}{\cos r}),\cosh^{-1}(\frac{1}{\cos r})\right]$, 
\[
f(y)=y^2+(\arccos (\cos r\cosh y))^2
\]
has a unique minimum at $y=0$, where
\[
f^{''}(0)=2(1-r\cot r).
\] 
Hence by Laplace method, we can easily obtain that
\[
J_1(t)\sim_{t\rightarrow 0}e^{-\frac{r^2}{4t}}\left(\frac{r}{\sin r}\right)^n\sqrt{\frac{4\pi t}{1-r\cot r}}
\]
On the other hand, on $\left(-\infty,-\cosh^{-1}(\frac{1}{\cos r})\right)\cup\left(\cosh^{-1}(\frac{1}{\cos r}),+\infty\right)$, the function 
\[
g(y)=y^2-(\cosh^{-1} (\cos r\cosh y))^2
\]
has no minimum, which implies that $J_2(t)$ is negligible with respect to $J_1(t)$ in small $t$. Hence the conclusion.
\end{proof}

We can now extend the result to the $\theta\not=0$ case by applying the steepest descent method. 
\begin{lemma}
For $r\in(0,\frac{\pi}{2})$, $\theta\in[-\pi,\pi]$, 
\[
f(y)=(y+i\theta)^2+(\arccos (\cos r \cosh y))^2
\]
defined on the strip $|\mathbf{Re}(y)|<\cosh^{-1}\left( \frac{1}{\cos r}\right)$ has a critical point at $i\varphi(r,\theta)$, where $\varphi(r,\theta)$ is the unique solution in $[-\pi,\pi]$ to the equation
\[
\varphi(r,\theta)+\theta=\cos r\sin\varphi(r,\theta)\frac{\arccos(\cos \varphi(r,\theta)\cos r)}{\sqrt{1-\cos^2r\cos^2\varphi(r,\theta)}}.
\]
\end{lemma}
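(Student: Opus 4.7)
The plan is to compute $f'(y)$ by the chain rule, specialize to the purely imaginary axis $y=i\varphi$, and then turn the uniqueness question into the bijectivity on $[-\pi,\pi]$ of an explicit monotone function of $\varphi$.

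First, using $\tfrac{d}{du}\arccos(u)=-1/\sqrt{1-u^2}$, one obtains
\[
f'(y)=2(y+i\theta)-\frac{2\cos r\,\sinh y\,\arccos(\cos r\cosh y)}{\sqrt{1-\cos^2 r\cosh^2 y}}.
\]
At $y=i\varphi$ with $\varphi\in[-\pi,\pi]$ we have $\cosh(i\varphi)=\cos\varphi$ and $\sinh(i\varphi)=i\sin\varphi$; since $r\in(0,\pi/2)$ forces $\cos^2 r\cos^2\varphi<1$, the square root remains a positive real number. Hence
\[
f'(i\varphi)=2i\Bigl[(\varphi+\theta)-\frac{\cos r\sin\varphi\,\arccos(\cos r\cos\varphi)}{\sqrt{1-\cos^2 r\cos^2\varphi}}\Bigr],
\]
and the vanishing of this expression is exactly the equation stated in the lemma.

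For uniqueness, I would introduce $\alpha=\alpha(\varphi):=\arccos(\cos r\cos\varphi)\in(0,\pi)$ (open because $r>0$) and define
\[
F(\varphi):=\varphi-\cos r\sin\varphi\cdot\frac{\alpha}{\sin\alpha},
\]
so that the critical point equation becomes $F(\varphi)=-\theta$. To compute $F'$ I would use $\alpha'(\varphi)=\cos r\sin\varphi/\sin\alpha$ together with the clean identity
\[
\cos^2 r\sin^2\varphi=\sin^2\alpha-\sin^2 r,
\]
which follows immediately from $\cos\alpha=\cos r\cos\varphi$. After substitution the many cross terms collapse and one is left with
\[
F'(\varphi)=\frac{\sin^2 r\,(\sin\alpha-\alpha\cos\alpha)}{\sin^3\alpha}.
\]
Since $\alpha\mapsto\sin\alpha-\alpha\cos\alpha$ vanishes at $0$ and has derivative $\alpha\sin\alpha>0$ on $(0,\pi)$, it is strictly positive there; hence $F'(\varphi)>0$ throughout $[-\pi,\pi]$. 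Because $\sin(\pm\pi)=0$ we also have $F(\pm\pi)=\pm\pi$, so $F$ is a homeomorphism of $[-\pi,\pi]$ onto itself and the equation $F(\varphi)=-\theta$ admits exactly one solution for every $\theta\in[-\pi,\pi]$.

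The main obstacle in this plan is the simplification of $F'$: a naive differentiation produces a sum of terms in $\alpha,\sin\alpha,\cos\alpha,\sin\varphi,\cos\varphi$ whose sign is not at all transparent. The identity $\cos^2 r\sin^2\varphi=\sin^2\alpha-\sin^2 r$ is the one trick that forces all the cross terms to cancel and condenses $F'$ into a single factor of known sign.
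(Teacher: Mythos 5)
Your proof is correct and follows essentially the same route as the paper: both reduce uniqueness to the strict monotonicity on $[-\pi,\pi]$ of $\varphi\mapsto\varphi-\cos r\sin\varphi\,\arccos(\cos r\cos\varphi)/\sqrt{1-\cos^2 r\cos^2\varphi}$, and your simplified derivative $\sin^2 r\,(\sin\alpha-\alpha\cos\alpha)/\sin^3\alpha$ is exactly the paper's expression $\frac{\sin^2 r}{1-u^2}\bigl(1-\frac{u\arccos u}{\sqrt{1-u^2}}\bigr)$ with $u=\cos\alpha$. You merely supply details the paper leaves implicit (the identity $\cos^2 r\sin^2\varphi=\sin^2\alpha-\sin^2 r$, the positivity of $\sin\alpha-\alpha\cos\alpha$, the endpoint values $F(\pm\pi)=\pm\pi$, and the check that $f'(i\varphi)=0$ is the stated equation).
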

\begin{proof}
Let $u=\cos r\cos\varphi$,
\[
\frac{\partial}{\partial\varphi}\left(\varphi-\cos r\sin\varphi\frac{\arccos(\cos\varphi\cos r)}{\sqrt{1-\cos^2 r\cos^2\varphi}}\right)=\frac{\sin^2 r}{1-u(r,\theta)^2}\left(1-\frac{u(r,\theta)\arccos u(r,\theta)}{\sqrt{1-u^2(r,\theta)}}
\right)\]
is positive, thus $\theta=\varphi-\cos r\sin\varphi\frac{\arccos(\cos\varphi\cos r)}{\sqrt{1-\cos^2 r\cos^2\varphi}}$ is a bijection from $[-\pi, \pi]$ onto itself, hence the uniqueness.
\end{proof}
Moreover, observe that at $\varphi(r,\theta)$, 
\[
f^{''}(i\varphi(r,\theta))=\frac{2\sin^2 r}{1-u(r,\theta)^2}\left(1-\frac{u(r,\theta)\arccos u(r,\theta)}{\sqrt{1-u^2(r,\theta)}} \right),
\]
is positive, where $u(r,\theta)=\cos r\cos\varphi(r,\theta)$. 

By using the steepest descent method we can deduce 
\begin{prop}
Let  $r\in(0,\frac{\pi}{2})$, $\theta\in[-\pi,\pi]$. Then when $t\to 0$,
\[
p_t(r,\theta)\sim_{t\rightarrow 0}\frac{1}{(4\pi t)^{n+\frac{1}{2}}\sin r}\frac{(\arccos u(r,\theta))^n}{\sqrt{1-\frac{u(r,\theta)\arccos u(r,\theta)}{\sqrt{1-u^2(r,\theta)}}}}\frac{e^{-\frac{(\varphi(r,\theta)+\theta)^2\tan^2 r}{4t\sin^2(\varphi(r,\theta))}}}{(1-u(r,\theta)^2)^{\frac{n-1}{2}}},
\]
where $u(r,\theta)=\cos r\cos\varphi(r,\theta)$.
\end{prop}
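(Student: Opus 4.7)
The plan is to carry out the steepest descent method on the integral representation in Proposition~\ref{prop1}, fed with the leading-order Riemannian asymptotics~(\ref{eq9}). Substituting (\ref{eq9}) into (\ref{eq8}) puts $p_t(r,\theta)$ in the schematic form
\[
p_t(r,\theta)\sim\frac{1}{(4\pi t)^{n+1}}\int_{-\infty}^{+\infty}G(y)\,\exp\!\left(-\frac{f(y)}{4t}\right)dy,
\]
where $f(y)=(y+i\theta)^2+(\arccos(\cos r\cosh y))^2$ is precisely the phase function singled out in the preceding lemma and $G(y)=\bigl(\arccos(\cos r\cosh y)/\sqrt{1-\cos^2 r\cosh^2 y}\bigr)^n$. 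Exactly as in the proof for $\theta=0$, one splits the real-axis integral at $y=\pm\cosh^{-1}(1/\cos r)$; the outer piece plays the role of $J_2$ from the previous proposition and is exponentially subdominant by a Laplace estimate, so for the leading asymptotics one may treat $f$ and $G$ as holomorphic on the strip $|\mathrm{Re}\,y|<\cosh^{-1}(1/\cos r)$.

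Next, deform the contour to the horizontal line $\mathrm{Im}\,y=\varphi(r,\theta)$ by Cauchy's theorem. The lemma identifies $y_0=i\varphi(r,\theta)$ as the unique critical point of $f$ in this strip, and the positivity of $f''(y_0)$ recorded immediately after the lemma shows this new contour is indeed the line of steepest descent. With $u=u(r,\theta)=\cos r\cos\varphi(r,\theta)$, the amplitude at the saddle equals $G(y_0)=(\arccos u/\sqrt{1-u^2})^n$, and Laplace's formula along the shifted contour yields
\[
p_t(r,\theta)\sim\frac{1}{(4\pi t)^{n+1}}\,G(y_0)\,e^{-f(y_0)/(4t)}\sqrt{\frac{8\pi t}{f''(y_0)}}.
\]
Inserting the explicit $f''(y_0)=\frac{2\sin^2 r}{1-u^2}\bigl(1-u\arccos u/\sqrt{1-u^2}\bigr)$ and bookkeeping the powers of $t$ and of $\sqrt{1-u^2}$, the prefactor collapses to
\[
\frac{1}{(4\pi t)^{n+1/2}\sin r}\cdot\frac{(\arccos u)^n}{(1-u^2)^{(n-1)/2}\sqrt{1-\dfrac{u\arccos u}{\sqrt{1-u^2}}}},
\]
which is precisely the amplitude in the statement.

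It remains to identify the exponent. Direct evaluation gives $f(i\varphi)=-(\varphi+\theta)^2+(\arccos u)^2$, and rearranging the transcendental equation of the lemma yields $\arccos u=(\varphi+\theta)\sqrt{1-u^2}/(\cos r\sin\varphi)$. Squaring and substituting produces
\[
f(i\varphi)=(\varphi+\theta)^2\cdot\frac{1-u^2-\cos^2 r\sin^2\varphi}{\cos^2 r\sin^2\varphi}=\frac{(\varphi+\theta)^2\tan^2 r}{\sin^2\varphi},
\]
after using the identity $1-\cos^2 r\cos^2\varphi-\cos^2 r\sin^2\varphi=\sin^2 r$. The main technical obstacle is the contour deformation: one must fix consistent branches of $\arccos$ and of $\sqrt{1-(\,\cdot\,)^2}$ so that the analytic continuation of $G$ across the branch locus $\cos r\cosh y=\pm 1$ agrees on the real axis and on the shifted contour, and one must show that the $O(t)$ remainder in~(\ref{eq9}) integrates to a genuinely subdominant contribution uniformly along the line $\mathrm{Im}\,y=\varphi(r,\theta)$. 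Both points are handled by the same cutoff-and-Laplace argument already used for the $\theta=0$ case, now applied in the variable $y\mapsto y+i\varphi(r,\theta)$.
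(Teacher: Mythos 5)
Your proposal is correct and follows exactly the route the paper intends: the paper states this proposition with no written proof beyond "by the steepest descent method," relying on the preceding lemma for the saddle $i\varphi(r,\theta)$ and the displayed value of $f''(i\varphi)$, and your computation fills in precisely those details. Your bookkeeping of the prefactor and your simplification of $f(i\varphi)=-(\varphi+\theta)^2+(\arccos u)^2$ to $(\varphi+\theta)^2\tan^2 r/\sin^2\varphi$ via the critical-point equation both check out.
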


\begin{remark}
By symmetry, the sub-Riemannian distance from the north pole to any point on $\bS^{2n+1}$ only depends on $r$ and $\theta$. If we denote it by $d(r,\theta)$, then from the previous propositions,
\item[(1)]
For $\theta\in [-\pi,\pi]$,
\[
d^2(0,\theta)=2\pi |\theta|-\theta^2
\]
\item[(2)]
For $\theta\in [-\pi,\pi]$, $r\in\left(0,\frac{\pi}{2}\right)$,
\[
d^2(r,\theta)=\frac{(\varphi(r,\theta)+\theta)^2\tan^2 r}{\sin^2(\varphi(r,\theta))}
\]
In particular,  the sub-Riemannian diameter of $\bS^{2n+1}$ is $\pi$. For a study of the sub-Riemannian geodesics on $\bS^{2n+1}$, we refer to \cite{MM}.
\end{remark}


\begin{thebibliography}{10}
\bibitem{A} Agrachev A., Boscain U., Gauthier J.P., Rossi F. \textit{The intrinsic hypoelliptic Laplacian and its heat kernel on unimodular Lie groups}, Journal of Functional Analysis, Vol. 256, 8, (2009), 2621-2655.
\bibitem{B} Barilari D., \textit{Trace heat kernel asymptotics in 3D contact sub-Riemannian geometry}, arXiv:1105.1285
\bibitem{BB} Baudoin, F., Bonnefont, M. \textit{The subelliptic heat kernel on $SU(2)$: representations, asymptotics and gradient bounds,} Math. Z. \textbf{263} (2009) 647-672
\bibitem{bauer} Bauer R.O. , \textit{Analysis of the horizontal Laplacian for the Hopf fibration}, Forum Mathematicum, (2005), Vol. 17, 6,  903--920
\bibitem{BGG} Beals, R., Gaveau, B., Greiner, P. C. \textit{Hamilton-Jacobi theory and the heat kernel on Heisenberg groups,} J. Math. Pures Appl. 79, 7 (2000) 633-689
\bibitem{BFM} Branson, T.P., Fontana, L., Morpurgo, C. \textit{Moser-Trudinger and Beckner-Onofri's inequalities on the CR sphere,} arXiv:0712.3905v3
\bibitem{Bo} Bonnefont M., \textit{The subelliptic heat kernel on SL(2,R) and on its universal covering:  integral representations and some functional inequalities}. To appear in Potential analysis.
\bibitem{CKS} Cowling, M. G., Klima, O., Sikora, A. \textit{Spectral Multipliers for the Kohn sublaplacian on the sphere in $\mathbb{C}^n$,} Trans. Amer. Math. Soc. \textbf{363} No. 2 (2011) 611-631
\bibitem{CR} Dragomir S., Tomassini G., \textit{Differential geometry and analysis on CR manifolds}, Birkh\"auser, Vol. 246, 2006.
\bibitem{El} Eldredge, N. \textit{Gradient estimates for the subelliptic heat kernel on H-type groups}. J. Funct. Anal. 258 (2010), pp. 504-533. 
\bibitem{Ga} Gaveau B., \textit{Principe de moindre action, propagation de la chaleur et estim\'eees sous elliptiques sur certains groupes nilpotents}, Acta Math.
Volume 139, Number 1, 95-153, (1977).
\bibitem{G} Geller, D., \textit{The Laplacian and the Kohn Laplacian for the sphere}, J. Differential Geometry. \textbf{15} (1980) 417-435
\bibitem{Li} Li H.Q., \textit{Estimation optimale du gradient du semi-groupe de la chaleur sur le groupe de Heisenberg.}, Jour. Func. Anal. , 236, pp 369-394, (2006).
\bibitem{LF} Lieb, E., Frank R., \textit{ Sharp constants in several inequalities on the Heisenberg group.}, arXiv:1009.1410, To appear in Ann. Math.
\bibitem{MM} Molina, M, Markina I. : \textit{Sub-Riemannian geodesics and heat operator on odd dimensional spheres},  arXiv:1008.5265
\bibitem{T2}Taylor, M. E. \textit{Partial differential equations. II,} Applied Mathematical Sciences \textbf{116}, Springer-Verlag, New York (1996) \end{thebibliography}
\end{document}